\documentclass[11pt]{amsart}

\usepackage{amsmath,amssymb,amsthm, mathrsfs}
\usepackage{tikz}
\usepackage{circuitikz}
\usepackage{graphicx,multicol}
\usepackage{comment,relsize,braket}
\usepackage{enumerate,multicol}
\usepackage{blkarray}
\usepackage[colorlinks=true]{hyperref}
\usepackage{thmtools,thm-restate}
\definecolor{red}{rgb}{1.00,0.00,0.00}
\usepackage[top=1in, bottom=1in, left=1in, right=1in]{geometry}
\usepackage{xcolor,mathtools,cleveref}

\usepackage{tabularray}

\oddsidemargin=0.5cm
\evensidemargin=0.5cm
\textwidth=16cm
\textheight=21cm

\def\ker{\operatorname{Ker}}
\def\im{\operatorname{Im}}
\def\rk#1{\hbox{\rm rank}\,(#1)}

\numberwithin{equation}{section}

\newtheorem{theorem}{Theorem}[section]
\newtheorem{lemma}[theorem]{Lemma}
\newtheorem{corollary}[theorem]{Corollary}
\newtheorem{proposition}[theorem]{Proposition}
\newtheorem{example}[theorem]{Example}

\newtheorem{remark}[theorem]{Remark}
\newtheorem{definition}[theorem]{Definition}

\newtheorem*{notation}{Notation}
\newtheorem*{acknowledgment*}{Acknowledgment}


\def\N{\mathbb{N}}

\def\Z{\mathbb{Z}}



\newcommand{\D}{\Delta}
\renewcommand{\d}{\delta}

\renewcommand{\l}{\lambda}

\newcommand{\bs}{\setminus}
\newcommand{\q}{\quad}

\newcommand{\la}{\langle}
\newcommand{\ra}{\rangle}


\renewcommand{\b}{\beta_{1,\l}}
\newcommand{\I}{\mathcal I}
\newcommand*{\SM}[1]{S^e(#1)}
\newcommand*{\IM}[1]{I^e(#1)}

\newcommand*{\cIM}[1]{\mathcal I^e(#1)}
\newcommand*{\cSM}[1]{\mathcal S^e(#1)}
\newcommand*{\JM}[1]{\mathcal J^e(#1)}

\renewcommand{\ll}{\left\lfloor}
\newcommand{\rr}{\right\rfloor}
\newcommand{\lc}{\left\lceil}
\newcommand{\rc}{\right\rceil}
\def\wid{\operatorname{wd}}
\newcommand{\fsy}{f}

\begin{document}
\title{numerical semigroups of Sally Type}

\author{Saipriya Dubey}
\address{Chennai Mathematical Institute, India}
\email{saipriyad@cmi.ac.in}
\author{Kriti Goel}
\address{Basque Center for Applied Mathematics, Alameda de Mazarredo 14, Bilbao, Bizkaia, 48009 Spain}
\email{kritigoel.maths@gmail.com}
\author{N\.{i}l \c{S}ah\.{i}n}
\address{Department of Industrial Engineering, Bilkent University, Ankara, 06800 Turkey}
\email{nilsahin@bilkent.edu.tr}
\author{Srishti Singh}
\address{Mathematics Department, University of Missouri, Columbia, MO 65211, USA.}
\email{spkdq@umsystem.edu}
\author{Hema Srinivasan}
\address{Mathematics Department, University of Missouri, Columbia, MO 65211, USA.}
\email{srinivasanh@missouri.edu}

\keywords{Sally semigroups, Betti Numbers, Gorenstein}
\thanks{2020 {\em Mathematics Subject Classification}. Primary 13D02, 13D05; Secondary 20M14, 13H10}
\thanks{The workshop was hosted and financially supported by Banff International Research Station at its CMO Oaxaca location. Additional financial support was provided by the NSF grant DMS–2433082.}

\dedicatory{Dedicated to Judith Sally for her lifetime contributions to Algebra as well as mentoring many mathematicians.}
  
\begin{abstract}
     Judith Sally proved in 1980 that the associated graded ring of one-dimensional Gorenstein local rings of multiplicity $e$ and embedding dimension $e-2$ are Cohen-Macaulay. She showed that the defining ideal of the associated graded ring of such rings is generated by ${e-2 \choose 2}$ elements. Numerical semigroup rings are a big class of one-dimensional Cohen-Macaulay rings. In 2014, Herzog and Stamate proved that the numerical semigroup $\la e,e+1,e+4,\ldots,2e-1 \ra$ defines a Gorenstein semigroup ring satisfying Sally's conditions above and such semigroups are called Gorenstein Sally Semigroups. We call a numerical semigroup as {\it Sally type} if its multiplicity is one more than its width. In this paper, we study Sally type numerical semigroups of the form $\SM{m,n} = \la \{ e,e+1,\ldots,2e-1\} \bs \{e+m,e+n\} \ra,$ for some $2 \leq m <n \leq e-2$.  We give a formula for its Frobenius number along with a necessary and sufficient condition for it to be Gorenstein. We compute the minimal number of generators for the defining ideal of $k[\SM{m,n}]$. Additionally, we present an algorithm and a GAP code used in applying Hochster's combinatorial formula to compute the first Betti number of $k[\SM{m,n}]$.
\end{abstract} 

\maketitle
\section{Introduction}

Let $S=\langle s_1,s_2,\ldots,s_g \rangle=\left\{\sum\limits_{i=1}^{g}u_is_i \mid u_i\in \mathbb{N}\right\}$  be a numerical semigroup, where $s_1<s_2<\cdots<s_g$ is a sequence of positive integers with $\gcd(s_1,s_2,\hdots,s_g)=1$. Let $k$ be a field, and $A = k[X_1, X_2, \dots, X_g] $ the polynomial ring in $g$ variables. Consider the $k$-algebra homomorphism $ \phi: A \to k[S]$  defined by
$\phi(X_i) = t^{s_i}$, where  $k[S]=k[t^{s_1},\hdots,t^{s_g}]$ is the semigroup ring of $S$. The kernel of this homomorphism, denoted by $I_S,$ is the defining ideal of the affine curve $C_S$ with parametrization $$X_1 = t^{s_1} , X_2=t^{s_2} , \ldots , X_g =t^{s_g}$$ corresponding to $S$. When $s_1,s_2,\ldots, s_g$ is a minimal set of generators for $S$, the difference $\wid(S) = s_g-s_1$ is the width of $S$, while $s_1$ is its multiplicity and $g$ is its embedding dimension. Let $R_S = k[[t^{s_1} , \ldots, t^{s_g} ]]$ be the local ring with the maximal ideal $\mathfrak{m} = \langle t^{s_1}, \ldots,t^{s_g} \rangle$. Then $gr_{\mathfrak{m}}(R_S )=\bigoplus_{i \ge 0} 
\mathfrak{m}_i/\mathfrak{m}_{i+1}\cong A/I^*_S$ is the associated graded ring where $I^*_S=\langle f^* \mid f\in I_S \rangle$ with $f^*$ denoting the initial form of $f$.

It is known that the minimal number of generators of $I_S$ for $g\geq 4$ is arbitrarily large; see \cite{Bresinsky}. Hence, it is natural to ask, for some specific classes of numerical semigroups $S$, ``is there a bound for $\mu(I_S)$?" or perhaps there is a good bound with fixing another invariant besides the embedding dimension. There is much work in the literature determining the generators and the first Betti number of the defining ideals of different classes of monomial curves; see \cite{Bresinsky,GimSenSri,Herzog,HerzogStamate,Stam,Valla}. The examples in \cite{Bresinsky} show that for any embedding dimension $g$, and any multiplicity $e$, $\mu(I_S)$ is unbounded. In this paper, we study the classes fixing both multiplicity and the width. 

J. Sally \cite{Sally} established the Cohen-Macaulayness of the associated graded ring of local Gorenstein rings whose difference between the embedding dimension and dimension is $e-3$ and explicitly wrote down the generators of the defining ideals. When the dimension is $1$, that is, if the embedding dimension equals $e-2$,  these are the defining ideals of a symmetric  numerical semigroup which are termed Sally Semigroups \cite{HerzogStamate} by  Herzog and Stamate. They showed that $S=\langle e,e+1,e+4,\hdots,2e-1 \rangle$ is a symmetric Sally semigroup and the number of minimal generators $\mu(I^\ast_S)$ of $I^\ast_S$ satisfies \[\mu(I^*_S)={e-2 \choose 2}\leq {\wid(S)+1 \choose 2} \] and conjectured that $\mu(I_S)\leq {\wid(S)+1 \choose 2}$ and in 2024 Caviglia et al. \cite{CMS} generalized this conjecture to $\beta_t\leq t{\wid(S)+1 \choose t+1}$, where $\beta_t$ is the $t^{\text{th}}$ Betti number of $I_S$.  Motivated by this, we say a numerical semigroup is of Sally type if its multiplicity is one more than its width.

Consider the following Sally type numerical semigroups with multiplicity $e$ and embedding dimension $e,$ $e-1,$ and $e-2$ respectively. 
 \begin{align*} 
 S^e &= \langle \{e+i\ |\  0\le i\le e-1\} \rangle, \ \SM{m}= \langle \{e+i\ |\  0\le i\le e-1, i \neq m\} \rangle, \\
\text{ and } \SM{m,n} &= \langle \{e+i\ |\  0\le i\le e-1, i \neq m, n\} \rangle 
 \end{align*}
where $1 \leq m<n\leq e-2.$ Observe that $S^e$ is an arithmetic sequence and it is the interval completion of $\SM{m}$ and $\SM{m,n}$. A minimal set of generators for $S^e$ is obtained by Patil in \cite{Patil1993239} and the minimal
free resolution and a formula for the Betti numbers is obtained by Gimenez, Sengupta and Srinivasan  in \cite{GimSenSri,Patil1993239}. Note that the Sally type numerical semigroup $\SM{2,3}$ is actually a Sally semigroup mentioned above (see \cite{HerzogStamate,Sally}).

In this paper, we compute the minimal generating set for the defining ideal $\IM{m,n}$ of the semigroup ring $k[\SM{m,n}]$. We start by describing the Sally type numerical semigroups of the form $\SM{m,n}$ in \Cref{section:sallytypesemigroups}. To obtain a minimal generating set of $\IM{m,n}$, we first compute the minimal number of generators combinatorially using Hochster's Formula \cite[Theorem 5.5.1]{BrunsHerzog}.  This is computationally intense, and we move some of the repetitive computations to the Appendix.  The background needed to apply Hochster's formula for our situation is given in \Cref{section:background}. In \Cref{Section:main}, we prove our main result:

\begin{restatable*}{theorem}{maintheorem}
\label{thm:maintheorem}
     Let $\SM{m,n}$ be a Sally type numerical semigroup where $2 \leq m < n \leq e-2$. Then the number of minimal generators of $\IM{m,n}$ is \begin{equation}\label{eq:maintheorem}
         \mu(\IM{m,n} ) = \begin{cases}
         {e-2 \choose 2}, & (m,n) \in \{(2,4),(2,5),(3,4)\} \\
         {e-2 \choose 2} -1, & (m=2) \text{ and } (n=3 \text{ or } n \geq 6) \\
             {e-2 \choose 2} -2, &  \text{ otherwise. }
         \end{cases}
     \end{equation}
\end{restatable*}
Note that it easily follows that the above numerical semigroups of  Sally type satisfy Herzog and Stamate's width conjecture mentioned above (see Corollary \ref{widthconj}).  Finally, using this theorem, a minimal generating set for $\IM{m,n}$ is given in \Cref{section:minimalgenerators}. We conclude with an algorithm and a GAP code based on the proof of our main theorem in \Cref{section:algorithm}.

\section{Numerical Semigroups of Sally Type}\label{section:sallytypesemigroups}

\begin{definition}
    A Sally type numerical semigroup $\SM{m,n}$ is a numerical semigroup minimally generated by the set $\cSM{m,n} = \{e,e+1,\ldots,2e-1\} \bs \{e+m,e+n\}$ for some $1 \leq m <n \leq e-2$.
\end{definition}

In this article, we only consider the case where $m \geq 2.$ 

\begin{notation}
    For ease of reading, we introduce the following notation: For $x<y$ in $\N$, $[x,y]$ is the set of integers $\{x,x+1,\ldots,y\}$. Let $\JM{m,n} = [0,e-1] \bs \{m,n\}$. 
\end{notation}
Let $k$ be a field and $t$ be an indeterminate. The semigroup ring associated to $\SM{m,n}$ is given by $k[\SM{m,n}] = k[t^{e+i} \mid i \in \JM{m,n}].$ Let $A^e(m,n)=k[X_i: i \in \JM{m,n}]$ be a polynomial ring in $e-2$ variables. Consider the $k$-algebra homomorphism $ \phi: A^e(m,n) \to k[t]$  defined by
$\phi(X_i) = t^{e+i}$. Then $k[\SM{m,n}] \cong A^e(m,n) / \IM{m,n}$, where  $\IM{m,n} = \ker \phi$ is the defining ideal of $\SM{m,n}$. 

In order to understand the structure of a semigroup, the first step is to compute its Frobenius number; the greatest integer not in $\SM{m,n}.$ 

\begin{proposition}\label{prop:Frob of semigroup}
For $2 \leq m < n \leq e-2$, let $F(\SM{m,n})$ denote the Frobenius number of $\SM{m,n}.$ Then
\[ F(\SM{m,n}) = \begin{cases}
        2e+3, & (m,n) = (2,3), \\
        e + n, & \text{otherwise.}
    \end{cases}\]
\end{proposition}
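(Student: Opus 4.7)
The plan is to analyze, residue class by residue class modulo $e$, which integers lie in $\S$ and identify the largest missing integer in each class; the Frobenius number is the maximum of these values. For $r \in \J$, the generator $e + r$ lies in $\S$, so every $ke + r$ with $k \geq 1$ is in $\S$, and the largest integer of residue $r$ not in $\S$ is at most $r \leq e - 1$. All the work is in the two exceptional residues $r \in \{m, n\}$.

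I first observe that any element of $\S$ strictly less than $2e$ must be a single generator (each generator being $\geq e$). Because $e + m$ and $e + n$ are excluded from the generating set, they are not in $\S$. So for $r \in \{m, n\}$ I need to find the smallest $k \geq 2$ such that $ke + r \in \S$. For $r = m$, the decomposition $m = 1 + (m-1)$ with $1, m-1 \in \J$ is valid for every $m \geq 2$ (since $m - 1 \geq 1$ is distinct from $m$ and from $n > m$), giving $2e + m = (e + 1) + (e + m - 1) \in \S$; hence the largest missing integer of residue $m$ is $e + m$.

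For $r = n$, I first handle the generic case $(m, n) \neq (2, 3)$. If $n \neq m + 1$, use $n = 1 + (n - 1)$, valid since $n - 1 \neq m$. If $n = m + 1$ and $m \geq 3$, use $n = 2 + (m - 1)$, valid because $2 \in \J$ when $m \geq 3$ and $m - 1 \in \J$. Both give $2e + n \in \S$, so the largest missing integer of residue $n$ is $e + n$. Since $e + n > e + m > e - 1$, this yields $F_{e,m,n} = e + n$.

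For the exceptional case $(m, n) = (2, 3)$, no two elements of $\J = \{0, 1, 4, 5, \ldots, e-1\}$ sum to $3$, so I expect $2e + 3 \notin \S_{e,2,3}$. To confirm it, I write any putative expression as $Ke + T = 2e + 3$ with $K = \sum a_i$ and $T = \sum a_i \cdot i \geq 0$: the cases $K \in \{1, 2\}$ fail by direct inspection (single-index $T = e + 3$ unavailable; two-index $T = 3$ unavailable in $\J$), while $K \geq 3$ forces $T \leq -e + 3 < 0$ since $e \geq 5$. On the other hand $3e + 3 = 3(e + 1) \in \S_{e,2,3}$, so the largest missing integer of residue $3$ is $2e + 3 > e + 2$, matching the claimed $F_{e,2,3} = 2e + 3$. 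The main delicacy is the sub-case $n = m + 1$ of the generic argument, which is precisely what isolates $(2, 3)$ as the sole exception.
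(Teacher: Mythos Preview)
Your proof is correct. You and the paper both reduce the problem to showing $2e+m$ and $2e+n$ lie in $\S$ (and that $2e+3 \notin S_{e,2,3}$), but you organize the argument differently. The paper uses the criterion ``$F \notin S$ and $F+1,\ldots,F+e \in S$,'' which requires checking all of $2e,2e+1,\ldots,2e+n$; your Ap\'ery--set viewpoint (largest gap in each residue class mod $e$) lets you skip straight to the two nontrivial residues $m$ and $n$. Your case split for $2e+n$ is also different: you divide according to whether $n = m+1$ (using $n = 1 + (n-1)$ or $n = 2 + (m-1)$), whereas the paper divides according to whether $n = 2m-1$ (using $n = (m-1) + (n-m+1)$ or $n = (m-2) + (m+1)$). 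Your split has the advantage that the exceptional pair $(2,3)$ falls out naturally as the unique case $n = m+1$, $m = 2$ where neither decomposition is available.
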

\begin{proof}
 In order to prove the result, it is enough to check that $F(\SM{m,n}) \notin \SM{m,n}$ and for all $s$ such that $F(\SM{m,n}) < s \leq F(\SM{m,n})+e,$ we have $s \in \SM{m,n}.$ While it is easy to see that $F(\SM{m,n}) \notin \SM{m,n},$ we check the latter condition in each case separately.
 
 Let $(m,n)=(2,3).$ Then $\SM{2,3} = \la e,e+1,e+4,\ldots,2e-1 \ra.$ For any $s$ such that $2e+4 \leq s \leq 3e-1,$ we have $s= e + (e+j) \in \SM{2,3}$ where $4 \leq j \leq e-1,$ and for $s = 3e+x$ where $0 \leq x \leq 3,$ we have $s = (3-x) \cdot e + x \cdot (e+1) \in \SM{2,3}.$

Otherwise, if $(m,n) \neq (2,3),$ we show that $$e+n+1,e+n+2,\ldots,e+n+e\in \SM{m,n}.$$ 
Clearly, $e+n+1,e+n+2,\ldots,2e-1\in \SM{m,n}.$ It remains to show that $2e+\alpha \in \SM{m,n}$ where $0 \leq \alpha \leq n.$ Note that for $\alpha \in [0,n] \setminus \{m,n\}$, we have $2e+\alpha =e+(e+\alpha) \in \SM{m,n}.$ Also, $2e+m = (e+1) + (e+m-1) \in \SM{m,n}.$ Finally for $\alpha = n,$  we have $2e+n =e+(m-1)+[e+(n-m+1)] \in \SM{m,n}$ except for the cases when $n-m+1=m$ and $n-m+1=n.$ As $m \geq 2,$ the latter condition is not possible. When $n-m+1=m$ we have $2e+n = 2e+(2m-1)=(e+m-2)+(e+m+1) \in \SM{m,n}.$ 
\end{proof}

\begin{theorem} A Sally type numerical semigroup $\SM{m,n}$ is Gorenstein if and only if $(m,n) = (2,3).$
\end{theorem}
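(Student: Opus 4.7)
The plan is to use the classical characterization that the semigroup ring $k[S]$ is Gorenstein if and only if the numerical semigroup $S$ is symmetric, i.e.\ for every integer $x$, exactly one of $x$ and $F - x$ lies in $S$, where $F$ is the Frobenius number. Since the Frobenius number of $\S$ was computed in \Cref{prop:Frob of semigroup}, the strategy splits naturally into the two cases it produces.

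For the ``if'' direction, when $(m,n) = (2,3)$ the semigroup becomes $\langle e, e+1, e+4, \ldots, 2e-1 \rangle$, which is exactly the Gorenstein Sally semigroup of Herzog--Stamate. Symmetry can either be quoted directly from \cite{HerzogStamate} or checked by hand: with $F = 2e+3$ one verifies that each integer $0 \le x \le 2e+3$ satisfies exactly one of $x \in S$, $F-x \in S$, which is straightforward because the only gaps are $\{1,2,3,e+2,e+3,2e+3\}$ paired up by $x \leftrightarrow F-x$.

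For the ``only if'' direction, assume $(m,n) \neq (2,3)$. Then \Cref{prop:Frob of semigroup} gives $F = e + n$. The idea is to exhibit a single value of $x$ for which both $x \notin \S$ and $F - x \notin \S$, which immediately violates symmetry. The natural choice is $x = e + m$: since every generator lies in $[e, 2e-1]$, any nonzero sum of at least two generators is $\geq 2e > e+m$, so $e+m \in \S$ would force $e+m$ to itself be a generator, which it is not. On the other hand $F - x = n - m$ satisfies $1 \leq n - m \leq e - 4 < e$, so it is a positive integer below the multiplicity and therefore not in $\S$. Both fail to lie in $\S$, so $\S$ is not symmetric and $k[\S]$ is not Gorenstein.

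There is no serious obstacle; the only thing to be careful about is checking that $e+m$ really is a gap of $\S$ (which uses that all minimal generators sit in the half-open interval $[e,2e-1]$) and that $n-m$ is a positive integer strictly less than $e$ (which uses $2 \leq m < n \leq e-2$). Both are immediate from the hypotheses on $m,n$.
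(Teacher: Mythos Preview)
Your approach is correct and differs from the paper's. The paper invokes the genus criterion---$S$ is symmetric if and only if $(F+1)/2$ equals the number of gaps---and simply computes $g(S_{e,2,3}) = e+2$ and $g(S_{e,m,n}) = e+1$ for $(m,n) \neq (2,3)$, then checks the arithmetic. Your route through the symmetry definition is more hands-on but equally valid: for the ``only if'' direction, exhibiting the single witness pair $x = e+m$ and $F-x = n-m$, both gaps, kills symmetry without any counting and is arguably more transparent than the genus identity. The paper's method, in turn, handles the ``if'' direction with no case analysis at all.

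One slip in your sketch of the $(2,3)$ case: the gap set of $S_{e,2,3}$ is $\{1,2,\ldots,e-1\} \cup \{e+2,\,e+3,\,2e+3\}$, not $\{1,2,3,e+2,e+3,2e+3\}$; every positive integer below the multiplicity $e$ is automatically a gap. This does not break the argument---for each such gap $g$ one still has $2e+3-g \in S_{e,2,3}$ (the integers $4,\ldots,e-1$ map to the generators $e+4,\ldots,2e-1$)---but the listing as written is wrong for $e \geq 5$. Quoting \cite{HerzogStamate}, as you also suggest, sidesteps this entirely.
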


\begin{proof}
Note that $\SM{m,n}$ is Gorenstein if and only if $(F(\SM{m,n})+1)/2=g(\SM{m,n}),$ where $g(\SM{m,n})$ is the cardinality of gap set of $\SM{m,n}$ (see \cite[Corollary 4.5]{Rosales2009}).

When $(m,n)=(2,3),$ by \Cref{prop:Frob of semigroup} we get $F(\SM{2,3})=2e+3.$ Observe that $g(\SM{2,3})=e+2.$ Thus, 
$$\frac{F(\SM{2,3})+1}{2}=\frac{2e+3+1}{2}=e+2=g(\SM{2,3})$$
implying that $\SM{2,3}$ is Gorenstein.

When $(m,n) \neq (2,3),$ then by \Cref{prop:Frob of semigroup} we have $F(\SM{m,n})=e+n$ and observe that $g(\SM{m,n})=e+1.$ Then 
$$\frac{F(\SM{m,n})+1}{2}=\frac{e+n+1}{2}$$
is equal to the genus $g(\SM{2,3}) = e+1$ if and only if $n=e+1.$ Since this contradicts our choice of $n,$ we get that $\SM{m,n}$ is never Gorenstein in this case. Hence, the result holds.
\end{proof}

\begin{example} 
    $S^7(2,3) = \la 7,8,11,12,13 \ra$ is a Sally type numerical semigroup with Frobenius number $17$, and is Gorenstein.
\end{example}

In order to obtain a minimal generating set for $\IM{m,n}$, we first compute the minimum number of generators, $\mu(\IM{m,n})$, using Hochster's Formula \cite[Theorem 5.5.1]{BrunsHerzog}. To understand the technique behind this theorem, we need some background.

\section{Background: Homology of Stanley-Reisner Rings}\label{section:background}
For $\l \in S$, define the simplicial complex $\Delta_\l$ on the vertex set $V=\{s_1,\ldots,s_g\}$ as follows: $F=\{s_{i_0},\ldots,s_{i_r}\}$ for $1 \leq i_0 < \cdots < i_r \leq g$ is a face of $\D_\l$ if and only if $\lambda-\sum\limits_{j=0}^{r}s_{i_j} \in S.$ A face $F$ is called an $r$-dimensional face if $|F| = r+1.$ The dimension of the simplicial complex $\D_\l$ is defined as $\dim(\D_\l) = \max\{\dim F \mid F \in \D_\l\}.$

\begin{example}\label{ex:simplicialcomplex}
    Let $S = \la 7,8,11,12,13 \ra$, and $\l = 19 \in S$. Then $$\D_\l = \{\varnothing, \{7\}, \{8\}, \{11\}, \{12\}, \{7,12\}, \{8,11\}\}.$$
\end{example}

Define the \textit{augmented oriented chain complex} of $\D_\l$ as follows \cite[Section 5.3]{BrunsHerzog}: Suppose $\D_\l$ has dimension $d-1$, and $F = \{s_{i_0},\ldots,s_{i_r} \} \in \D_\l$ is an $r$-dimensional face for some $-1 \leq r \leq d-1$. Then
\begin{align*} 
    \tilde{\mathcal C}(\D_\l): 0 \to C_{d-1} \xrightarrow{\delta_{d-1}} C_{d-2} \to \cdots \to C_0 \xrightarrow{\delta_0} C_{-1} \to 0 \\
    \text{ where } \qquad C_r = \bigoplus_{\substack{F \in \D_\l,\\ \dim F =r}} F, \qquad \delta_r F = \sum_{j=0}^r (-1)^j F_j,
\end{align*}
where $F_j = \{s_{i_0},\ldots, \widehat{s_{i_j}},\ldots,s_{i_r}\}.$ Note that for any zero-dimensional face $F=\{s\},$ $\d_0(F) = \varnothing.$
\begin{example}\label{ex:simplicialcomplex2}
    Let $S$ and $\l$ be as in $\Cref{ex:simplicialcomplex}.$ Then 
    \[\tilde{\mathcal C}(\D_\l): 0 \to \Z^2 \xrightarrow{\small \begin{bmatrix}
       -1 & 0 \\
       0 & -1 \\
       0 & 1 \\
       1 & 0
    \end{bmatrix}} \Z^4 \xrightarrow{\small \begin{bmatrix}
        1 & 1 & 1 & 1
    \end{bmatrix}} \Z \to 0.\]
\end{example}

Now we state our key prerequisite that gives a combinatorial interpretation of the fine Betti numbers of $\D_\l.$ 

\begin{theorem}\label{thm:hochster'sformula}{\rm [Hochster's Formula, \cite[Theorem 5.5.1]{BrunsHerzog}]}
    For $\l \in S$, $0 \leq i \leq d$, 
    \[\beta_{i, \lambda} = \dim_k H_{i-1} \left( \tilde{\mathcal C}(\D_\l) \otimes k \right) = \dim_k \left(\frac{\ker \delta_{i-1} \otimes k}{\im \delta_{i} \otimes k} \right),\]
    where $H_{\bullet}(\tilde{\mathcal C}(\D_\l) \otimes k)$ denotes the reduced simplicial homology of $\D_\l$ with coefficients in $k$. Then the $i$-th Betti number of $k[S],$ $\beta_i = \sum_{\l \in S} \beta_{i,\l},$ for all $1 \leq i \leq d.$ 
\end{theorem}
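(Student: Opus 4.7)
The plan is to compute $\beta_{i,\lambda}(k[S])=\dim_k \Tor_i^A(k[S],k)_\lambda$ via the Koszul complex and then identify the $\lambda$-strand with the augmented oriented chain complex of $\D_\lambda$ defined just before the statement. Since any two resolutions give the same Tor, I would use $K_\bullet = K_\bullet(X_1,\dots,X_g;A)$, the Koszul complex on the variables, which is a minimal graded free resolution of $k$ over $A$. Then $\Tor_i^A(k[S],k) = H_i\bigl(k[S]\otimes_A K_\bullet\bigr)$, where in homological degree $i$ the complex is a copy of $k[S]$ for each $i$-subset $\{j_1<\cdots<j_i\}\subseteq\{1,\dots,g\}$, and the differential sends the basis element $e_{j_1}\wedge\cdots\wedge e_{j_i}$ to $\sum_{k=0}^{i-1}(-1)^k\,t^{s_{j_k}}\, e_{j_1}\wedge\cdots\widehat{e_{j_k}}\cdots\wedge e_{j_i}$.

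Next, I would restrict to the $\lambda$-graded strand using the $S$-grading $\deg X_j = s_j$. The element $t^\mu\, e_{j_1}\wedge\cdots\wedge e_{j_i}$ has degree $\mu+s_{j_1}+\cdots+s_{j_i}$, so the $\lambda$-piece has a one-dimensional summand for exactly those subsets $\{s_{j_0},\dots,s_{j_{i-1}}\}$ with $\lambda-\sum s_{j_\ell}\in S$, i.e., the $(i-1)$-faces of $\D_\lambda$. Under this bijection the Koszul differential, after factoring out the monomial $t^{\lambda-\sum s_{j_\ell}}$ and then tensoring with $k$, becomes (up to an overall sign) the simplicial boundary map $\delta_{i-1}$ of $\tilde{\mathcal C}(\D_\lambda)\otimes k$. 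Taking homology then gives $\Tor_i^A(k[S],k)_\lambda \cong H_{i-1}\bigl(\tilde{\mathcal C}(\D_\lambda)\otimes k\bigr)$, which is the first equality; the second is simply the chain-complex definition of simplicial homology. Finally, summing over $\lambda\in S$ uses only that the resolution is multigraded.

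The main obstacle is the bookkeeping in the third step: one must verify that the sign conventions in the Koszul differential match those of the augmented oriented chain complex, and in particular that the boundary $\delta_0(F)=\varnothing$ for a $0$-dimensional face corresponds correctly to the Koszul degree-$1$ differential (this is where the augmentation, and the index shift from $i$ to $i-1$, comes in). Once that identification is checked, the rest is formal. Rather than redo this verification in detail, I would simply cite \cite[Theorem 5.5.1]{BrunsHerzog} as the authors do, since the formula is entirely standard, and proceed to the novel computations of $\D_\lambda$ for $\lambda\in \S$ needed for \Cref{thm:maintheorem}.
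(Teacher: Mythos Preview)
The paper gives no proof of this statement: it is stated purely as background and attributed to \cite[Theorem~5.5.1]{BrunsHerzog}, with no argument supplied. Your Koszul-complex sketch is the standard proof and is correct, and your concluding suggestion---to simply cite Bruns--Herzog and move on to the computations for $\D_\lambda$---is exactly what the authors do.
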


\begin{example}
    Let $S$ and $\l$ be as in \Cref{ex:simplicialcomplex}. We have the complex $\tilde{\mathcal C}$ from $\Cref{ex:simplicialcomplex2}$. It can be seen that $\ker \delta_0 \cong \Z^3$, and $\im \delta_1 \cong \Z^2$, so $\beta_{1,\l}=1$.
\end{example}

We end this section by stating a technical lemma about the complex $\tilde C(\D_\l)$, which will be used extensively in \Cref{Section:main}. 

\begin{lemma}\label{lem:ranks}
    Let $S$ be a numerical semigroup minimally generated by $g$ elements, and $\l \in S$. Let $\D_\l$ be the corresponding simplicial complex and $\tilde{\mathcal C}(\D_\l)$ the chain complex. Let $M$ denote the matrix representation of $\d_1$. Suppose $\D_\l$ has $\ell$ zero-dimensional faces.
    \begin{enumerate}[{\rm(1)}]
        \item Then $C_{-1}=\Z.$
        
        \item  Then $K_0  := \ker \delta_0 \cong \Z^{\ell-1}$. In particular, $\rk M \leq \ell-1 \leq g-1$.

        \item  Suppose $\{a,a_1\},\ldots, \{a,a_r\}$, $r < \ell$ are distinct $1$-dimensional faces of $\D_\l$. Then $\rk M \geq r$. Furthermore, if $\{b,c\}$ with $c \neq a, c \neq a_i$, $i=1,\ldots,r$ is another one-dimensional face of $\D_\l$, then $\rk M \geq r+1$.
        
        \item Suppose $\{a_1,b_1\},\ldots,\{a_r,b_r\}$ are $1$-dimensional faces of $\D_\l$ such that each $a_i$ is distinct from each $a_j$ and from each $b_l$. Then $\rk M \geq r$.   
    \end{enumerate}
\end{lemma}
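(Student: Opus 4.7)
\medskip

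\textbf{Proof proposal.} The plan is to read each claim off the shape of the differentials $\delta_0$ and $\delta_1$ in $\tilde{\mathcal C}(\D_\l)$. Concretely, by the definition of the augmented oriented chain complex, $\delta_0(\{s\}) = \emptyset$ for every $0$-dimensional face $\{s\}$, so $\delta_0$ is (after identifying $C_{-1}$ with $\Z$ via the generator $\emptyset$) the ``sum of coordinates'' map $\Z^\ell \to \Z$. Similarly, for a $1$-face $\{a,b\}$ with $a<b$, one has $\delta_1(\{a,b\}) = \{b\}-\{a\}$, so the columns of $M$ are precisely the signed incidence vectors $e_b-e_a \in \Z^\ell$ associated to the edges of the graph whose vertex set is the $0$-skeleton of $\D_\l$ and whose edge set is the $1$-skeleton. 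Part (1) is then immediate since $\emptyset$ is the unique $(-1)$-face.

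For part (2), the surjective map $\delta_0\colon \Z^\ell \to \Z$ has a free kernel of rank $\ell-1$, giving $K_0 \cong \Z^{\ell-1}$. Since $\im\delta_1 \subseteq K_0$, we get $\rk M \le \ell-1$; the bound $\ell \le g$ holds because the $0$-faces of $\D_\l$ are among the $g$ minimal generators of $S$, so $\rk M \le g-1$.

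For parts (3) and (4), I would exhibit the claimed number of linearly independent columns of $M$ directly from the signed incidence description. In (3), the $r$ edges sharing the common vertex $a$ contribute columns of the form $\pm(e_{a_i} - e_a)$ for $i = 1,\dots,r$, with the $a_i$'s pairwise distinct; these are clearly independent since each $e_{a_i}$ appears in exactly one of them, giving $\rk M \ge r$. For the ``furthermore'' statement, append the column $\pm(e_c - e_b)$ corresponding to $\{b,c\}$: since $c$ differs from $a$ and from every $a_i$, the coordinate $e_c$ appears in no earlier column, so this new column is independent of the previous $r$, yielding $\rk M \ge r+1$. For (4), each column $\pm(e_{b_i}-e_{a_i})$ is the unique one using the coordinate $e_{a_i}$, so again the $r$ columns are independent and $\rk M \ge r$.

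There is no real obstacle here; the entire lemma is a dressed-up statement about signed incidence matrices of graphs, and the only care required is in tracking signs and ensuring the named vertices are genuinely disjoint in the way each hypothesis demands. Once the columns of $M$ are identified with $e_b-e_a$ (up to sign), each independence claim reduces to observing that a particular basis vector $e_*$ appears in exactly one of the chosen columns.
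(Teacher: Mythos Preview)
Your proposal is correct and follows essentially the same approach as the paper: both identify $\delta_0$ as the all-ones map and read the rank bounds in (3) and (4) off explicit submatrices/columns of the boundary matrix in which a distinguished vertex coordinate appears in exactly one of the chosen rows/columns. The only cosmetic difference is that the paper displays $M$ with $1$-faces indexing rows (so it exhibits independent \emph{rows}), whereas you phrase the same argument in terms of independent \emph{columns} via the signed-incidence description $e_b-e_a$; the rank conclusion is of course unaffected.
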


\begin{proof}
\begin{enumerate}[(1)]
    \item Note that for any $\l \in S$, $\varnothing \in \D_\l.$ In fact, it is the only face of dimension $-1$ in $\D_\l$. So $C_{-1}=\Z$.

    \item Recall that $\d_0: C_0=\mathbb{Z}^\ell \rightarrow C_{-1}=\mathbb{Z}$ and for any zero-dimensional face $F$ of $\D_\l,$ $\d_0(F) = \varnothing$ implying that $\d_0$ is a non-zero map. Thus, $\rk{\d_0} =1$, $\im \d_0 \cong \mathbb{Z}$ and hence $K_0 \cong \mathbb{Z}^{\ell-1}.$

    \item If $\{a,a_1\},\ldots, \{a,a_r\}$, are distinct $1$-dimensional faces of $\D_\l$, then $M$ has the following submatrix
    {\small \[M' = \begin{blockarray}{cccccc}
    & \{a,a_1\} & \{a,a_2\} & \cdots & \{a,a_{r-1}\} & \{a,a_r\} \\
    \begin{block}{c[ccccc]}
    a & -1 & -1 & \cdots & -1 & -1 \\
    a_1 & 1 & 0 & \cdots & 0 & 0 \\
    a_2 & 0 & 1 & \cdots & 0 & 0 \\
    \vdots & \vdots & \vdots & \ddots & \vdots & \vdots \\
    a_{r-1} & 0 & 0 & \cdots & 1 & 0 \\
    a_r & 0 & 0 & \cdots & 0 & 1 \\
    \end{block} 
    \end{blockarray}\] }
    
    Since $\rk{M'}=r,$ we get $\rk M \geq r$.

    Now, for an additional one-dimensional face $\{b,c\}$ such that $c \neq a, c \neq a_i$, for any $i=1,\ldots,r$, consider a submatrix $M''$ of $M$ with columns corresponding to the one-dimensional faces $\{a,a_1\},\ldots, \{a,a_r\}, \{b,c\}.$ Since these columns are linearly independent, we get $\rk{M''} = r+1.$ Thus, in this case, $\rk M \geq r+1.$

    \item WLOG, assume $a_1<a_2<\cdots<a_r.$ In this case, $M$ has the following rank $r$ submatrix 
    {\small \[M' = 
    \begin{blockarray}{cccccc}
        &  \{a_1,b_1\} & \{a_2,b_2\} & \cdots & \{a_{r-1},b_{r-1}\} &  \{a_r,b_r\} \\
        \begin{block}{c[ccccc]}
            a_1 & -1 & 0 & \cdots & 0 & 0 \\
            a_2 & 0 & -1 & \cdots & 0 & 0 \\
            \vdots & \vdots & \vdots & \ddots & \vdots & \vdots \\
            a_{r-1} & 0 & 0 & \cdots & -1 & 0 \\
            a_r & 0 & 0 & \cdots & 0 & -1 \\ \cline{2-6} \\
             b_1 &  &  &  &  &  \\
             b_2 & & & & & \\
             \vdots & & & \ast & & \\
             b_{r-1} & & & & & \\
             b_r & & & & & \\
        \end{block}
    \end{blockarray}
    \]} 
    as irrespective of the block entries of ~$*$, the columns of the matrix $M'$ are linearly independent. \qedhere
\end{enumerate}
\end{proof}

\section{Main Theorem}\label{Section:main}

Throughout this section, we will use the following notation: $\tilde C(\D_\l)$ is the complex associated with $\l \in \SM{m,n}$, and $M$ denotes the matrix representation of $\d_1.$

\begin{remark}\label{rem:ideaofproof}
In order to compute $\beta_1 = \sum_{\l \in \SM{m,n}} \b$ using Hochster's Formula (\Cref{thm:hochster'sformula}), the crux of the proof lies in computing the number of zero and one-dimensional faces in $\D_\l$ for $\l \in \SM{m,n}$. To make our argument using this technique, we do the following: For each $\lambda \in \SM{m,n}$, using \Cref{lem:ranks}, we compute
\begin{enumerate}
     \item[I.] the number of zero-dimensional faces in $\D_\l$,
     \item[II.] $\rk M$, 
     \item[III.] $\beta_{1,\l} = \text{(I)}-\text{(II)}-1.$
\end{enumerate}

Of course, this only works as all but finitely many $\b$ are zero.
This section unfolds as follows. In \Cref{lem:complexisexact}, we eliminate all $\lambda \in \SM{m,n}$ with $\beta_{1,\l} =0$. In \Cref{lem:3specialcases}, we compute $\beta_{1,\l}$ for three specific choices of $\l$, as these require distinct techniques. Next is the main result, \Cref{thm:main}, which computes all remaining non-zero $\b$ thus giving us $\mu(\IM{m,n})$ for $m \geq 3$ and $n\neq 4$. Finally, in Corollaries \ref{cor:mn=24etc} and \ref{cor:mn=23}  we obtain $\mu(\IM{m,n})$ for $m=2, n \geq 4$, and $(m,n)=(2,3)$, respectively. For ease of reading, some repetitive computations are moved to the appendix. 
\end{remark}

\begin{lemma}\label{lem:complexisexact}
    The complex $\tilde C (\D_\l) $ is exact at $C_0$ for $\l \leq 2e+1$ and $\l \geq 4e-1.$
\end{lemma}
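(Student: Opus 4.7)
The content of the lemma is the vanishing of $H_0(\tilde C(\D_\l) \otimes k)$, which by Hochster's formula \Cref{thm:hochster'sformula} is $\beta_{1,\l}=0$. Combined with \Cref{lem:ranks}, this reduces to showing the $1$-skeleton of $\D_\l$ is connected, equivalently $\rk M = \ell - 1$ where $\ell$ is the number of vertices of $\D_\l$.

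For the small range $\l \le 2e+1$, my plan is to enumerate: the elements of $\S$ in $[0,2e+1]$ are $0$, the generators $\{e+j : j \in \J\}$, and possibly $2e, 2e+1$. When $\l=0$ we have $\D_\l=\{\varnothing\}$. When $\l=e+j$ with $j\in\J$, any other candidate vertex $e+j'$ would require $j-j' \in [-(e-1), e-1]\setminus\{0\}$, which lies outside $\S$, so the only vertex is $e+j$ itself. The case $\l=2e$ is analogous, leaving only $e$ as a vertex. For $\l=2e+1$ the vertices are $e$ and $e+1$ (both lie in $\J$ since $m\ge 2$), and the edge $\{e,e+1\}$ is present because $\l-2e-1=0 \in \S$. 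In each sub-case the $1$-skeleton is connected or trivially so.

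For the large range $\l \ge 4e-1$, \Cref{prop:Frob of semigroup} gives $F_{e,m,n} \le 2e+3 < 4e-1$ for $e\ge 5$, so $\l \in \S$. I would try to exhibit the vertex $e$ as a hub for $\D_\l$: since $\l-e \ge 3e-1 > F_{e,m,n}$, the vertex $e$ lies in $\D_\l$, and for any other vertex $e+j$ the edge $\{e,e+j\}$ is in $\D_\l$ precisely when $\l-2e-j \in \S$. Because $\l-2e-j \ge e$, this fails only when $\l-2e-j$ is a gap of $\S$ in $[e, F_{e,m,n}]$, and by \Cref{prop:Frob of semigroup} these gaps form the set $\{e+m, e+n\}$ in the non-Gorenstein case and $\{e+2, e+3, 2e+3\}$ in the Gorenstein case $(m,n)=(2,3)$. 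Consequently, at most three indices $j$ are problematic for any fixed $\l$.

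For those problematic indices I plan to produce a detour to $e$ via a secondary vertex (typically $e+1$): using the explicit form of the gap set to verify that the auxiliary edge $\{e+1, e+j\}$ belongs to $\D_\l$, and combining it with the edge $\{e, e+1\}$, which is itself present whenever $\l - 2e - 1 \in \S$. Where such a direct detour does not exist, I expect to fall back on \Cref{lem:ranks}(4) by exhibiting disjoint edges that arise from multiple representations of $\l$ and contribute independently to $\rk M$. Once the $1$-skeleton is connected, \Cref{lem:ranks}(2)–(3) force $\rk M = \ell-1$, hence $\beta_{1,\l}=0$. The main obstacle will be making the detour uniform across all the exceptional $(m,n)$ near the boundary $\l=4e-1$, especially in the Gorenstein case where $F_{e,m,n}$ is comparatively large; this will likely require splitting into a few sub-cases based on the residue of $\l-3e$ modulo the forbidden indices $m,n$.
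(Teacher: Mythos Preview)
Your proposal is correct and follows essentially the same strategy as the paper's proof: direct enumeration for $\l\le 2e+1$, and for $\l\ge 4e-1$ the use of $e$ as a hub vertex, with the at-most-three obstructed indices (coming from the gaps $\{e+m,e+n\}$ or $\{e+2,e+3,2e+3\}$) rerouted through an auxiliary vertex $e+y$, after which \Cref{lem:ranks} forces $\rk M=\ell-1$. The paper carries out exactly this plan, choosing $y=1$ for the $m$-obstruction and $y\in[1,n-1]\setminus\{m,n-m\}$ for the $n$-obstruction (and $y\in[4,e-1]\setminus\{x-4\}$ for the extra Gorenstein gap), so your anticipated sub-case split is precisely what is needed and no further fallback to \Cref{lem:ranks}(4) is required.
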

\begin{proof}
    When $\l \leq 2e-1$, then $\l$ is either zero or one of the minimal generators of $\SM{m,n}$. So $\D_\l = \{\varnothing, \{\l\} \}$. Next, $\D_{2e}=\{\varnothing, \{e\} \}$ and $\D_{2e+1}=\{\varnothing, \{e\}, \{e+1\},\{e,e+1\}\}$. In each case, $\b=0$.

For $\l \geq 4e-1,$ write $\l= 4e-1+x$ for some $x \geq 0$. First, consider the case $(m,n) \neq (2,3).$ For $j \in \JM{m,n} \setminus \{0\}$, since $ e+x-j-1 \geq 0$, we get 
\[ \l - e-(e+j) = e+ (e+x-j-1) \in \SM{m,n} \]
unless 
\begin{enumerate}
    \item $e+x-j-1 = m$, i.e., $j=e+x-m-1 \in \JM{m,n}$, or
    \item $e+x-j-1 =n$, i.e., $j= e+x-n-1 \in \JM{m,n}.$
\end{enumerate}

If neither (1) nor (2) is true, then $\{e,e+j\} \in \D_\l$ for all $j \in \JM{m,n} \setminus \{0\},$ giving us $e-3$ one-dimensional faces of the same form as in \Cref{lem:ranks} (3). If either (1) or (2) happens, then $\{e, e+(e+x-m-1)\}$ or $\{e, e+(e+x-n-1)\}$ is not in $\D_\l$, respectively. However, we may replace these faces with $\{e+1, e+(e+x-m-1)\}$ or $\{e+y, e+(e+x-n-1)\}$ for some $y\in [1,n-1]\setminus\{m,n-m\}$, respectively, still giving us $e-3$ faces as in \Cref{lem:ranks} (3). In either of the cases, we get $\rk M \geq e-3.$ 
In each case, apply \Cref{lem:ranks} (2) to get $e-3 \leq \rk M \leq  \ell-1 \leq e-3$, where $\ell$ denotes the number of zero-dimensional faces. Thus, $\b = 0$.

Now let $(m,n) = (2,3).$ Using the same arguments as above, with $\l= 4e-1+x$ for some $x \geq 0$, we get that $\{e,e+j\} \in \D_\l$ unless either (1) or (2) above is true or $\l-e-(e+j) =F(\SM{2,3}) =2e+3$, i.e., $j=x-4 \in \JM{2,3} \setminus \{0\}.$ 
Since the previous paragraph's reasoning still works in the event of $(1)$ and $(2)$, we only need to check what happens when $j = x-4 \in \JM{m,n} \setminus \{0\}.$ Note that this happens only when $x \geq 4$ and $x \neq 6,7.$ In that case, while $\{e,e+(x-4)\} \notin \D_\l$, we have $\{e+y, e+(x-4)\} \in \D_\l$ for some $y \in [4,e-1] \bs \{x-4\}$. This still gives us $e-3$ one-dimensional faces and repeating the same arguments as above implies that $\b=0.$
\end{proof}

We now want to compute the non-zero $\b$, so necessarily, $2e+2 \leq \l \leq 4e-2$.

\begin{remark}\label{rem:Step1}
    For $2e+2 \leq \l \leq 4e-2$, write $\l = 2e+2 +x$ where $0 \leq x \leq 2e-4.$ To simplify the computations for step(I) of \Cref{rem:ideaofproof}, set $$A=[0,x+2] \bs \{m,n\}, \q B=\{x+2-m,x+2-n\}.$$ 
Then for $(m,n) \neq (2,3)$ we have
\begin{align*}\label{count1dfaces4}
      \{e+j\} \in \D_\l 
      &\Longleftrightarrow (2e+2+x)-(e+j) = e + (x+2-j) \in \SM{m,n} \\  \nonumber
      &\Longleftrightarrow j \in  (\JM{m,n} \cap A) \bs B.
 \end{align*}
Let $B'= \{x,x-1,x-1-e\}$. Note that when $(m,n) = (2,3),$ then $\{e+j\} \in \D_\l \iff j \in (\JM{m,n} \cap A) \bs B'.$
\end{remark}

\begin{lemma}\label{lem:3specialcases}
Let $2 \leq m < n \leq e-2.$ Then
\begin{enumerate}[{\rm(1)}]
    \item 
    $$\beta_{1,2e+2m}= \begin{cases}
 m-1, & 2 \leq m \leq \ll \frac{n-1}{2} \rr, \\
     m-2, & \lc \frac{n}{2} \rc \leq m \leq \ll \frac{e-1}{2} \rr,\\
     e-2-m, & \left\lceil \frac{e}{2} \right\rceil \leq m \leq e-3.
 \end{cases}$$ 
\item   $$  \beta_{1,2e+m+n}= \begin{cases}
      \ll \frac{m+n}{2} \rr - 1, & 2 \leq m \leq e-n-1, \\
     e-3 -  \ll \frac{m+n-3}{2} \rr, & e-n \leq m \leq e-3.
 \end{cases}$$
\item  $$\beta_{1,2e+2n}= \begin{cases}
     n-2, & 3 \leq n \leq \ll \frac{e-1}{2} \rr, \\
     e-n-2, & \left\lceil \frac{e}{2} \right\rceil \leq n \leq \ll \frac{e+m-1}{2} \rr, \\
     e-n-1, & \lc \frac{e+m}{2} \rc \leq n \leq e-2.
 \end{cases}$$  
In particular, for $(m,n) = (2,3)$ we have
 \begin{align*} 
 \beta_{1,2e+4} = 0, \q \beta_{1,2e+5} = 1, \q \text{ and } \q \beta_{1,2e+6} = 
 \begin{cases}
     0, & e=5, \\
     1, & e \geq 6.
 \end{cases} 
 \end{align*} 
 \end{enumerate}
\end{lemma}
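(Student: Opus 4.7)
The plan is to apply the three-step procedure from \Cref{rem:ideaofproof}: for each of the three values $\l \in \{2e+2m,\, 2e+m+n,\, 2e+2n\}$, compute the number $\ell$ of zero-dimensional faces of $\D_\l$, then $\rk M$, and combine via $\b = \ell - 1 - \rk M$.

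First, for the vertex count, I would use \Cref{rem:Step1}. Substituting $x = 2m-2$ in part (1) gives $A = [0,2m] \bs \{m,n\}$ and $B = \{m,\, 2m-n\}$; similarly $x = m+n-2$ for (2) and $x = 2n-2$ for (3). The sub-cases in the statement track the relative positions of $2m$ (resp.\ $m+n$, $2n$) against $e-1$, together with whether $2m-n$, $m+n-m=n$, or $2n-m$ actually lies inside $\J \cap A$ and thus gets excluded. In part (1), for instance, when $n > 2m$ the element $2m - n$ is negative and so does not reduce the count, giving $\ell = 2m$; when $n \leq 2m \leq e-1$ it removes one vertex, giving $\ell = 2m - 2$; and when $2m \geq e$ the set $\J \cap A$ saturates at $e-2$ elements and $2m-n$ is forced to lie inside it, giving $\ell = e-3$. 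Part (3) acquires a third sub-range because $2n-m$ exits $[0, e-1]$ precisely when $n \geq \lc (e+m)/2 \rc$, so $2n-m$ fails to be in $\J$ and is not removed.

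Next, for the edges and rank: the one-dimensional faces $\{e+i,\, e+j\}$ of $\D_\l$ are those with $\l-(e+i)-(e+j) \in \S$. The bulk of these come from pairs with $i+j$ equal to $2m$, $m+n$, or $2n$ (residue $0 \in \S$); such pairs are symmetric about the midpoint and hence automatically pairwise disjoint, so \Cref{lem:ranks}(4) delivers a lower bound on $\rk M$ equal to the number of valid such pairs after excluding those with a coordinate in $\{m,n\}$. In the regimes $2m \geq e$, $m+n \geq e$, or $2n \geq e$ additional edges appear, arising when $\l-(e+i)-(e+j)$ is a positive element of $\S$, necessarily of the form $e+k$ with $k \in \J$; these must be enumerated, and the resulting incidence graph is then analyzed using $\rk M = \ell - (\text{number of connected components})$ to pin $\rk M$ down exactly. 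The matching upper bound $\rk M \leq \ell-1$ is \Cref{lem:ranks}(2).

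Finally, the $(m,n)=(2,3)$ tail is handled by specializing to $\l = 2e+4,\, 2e+5,\, 2e+6$ and using the $B'$ variant of \Cref{rem:Step1} (since $F_{e,2,3}=2e+3$, not $e+n$, which removes extra vertices); each of these yields a very small simplicial complex to inspect directly. The main obstacle will be the matching upper bound on $\rk M$ in the $2m \geq e$ (and analogous) regimes of parts (1)--(3): the lower bound via disjoint edges is immediate, but ruling out additional independent rows requires careful case-work over all pairs $(i,j)$ with $\l-(e+i)-(e+j) \in \S$, tracking how the additional edges from the $e+k$ residues either join existing components or create new ones.
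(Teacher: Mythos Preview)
Your plan coincides with the paper's approach: count vertices via \Cref{rem:Step1}, determine $\rk M$ from the edge structure of $\D_\l$ (the paper's Appendix phrasing ``any other $1$-dimensional face that adds to the rank of $M$ must contain $e+l$ for some $l \geq 2m-e+1$'' is precisely your connected-components bookkeeping, just not named as such), and subtract. One small correction on the $(m,n)=(2,3)$ tail: for $\l \in \{2e+4,\,2e+5,\,2e+6\}$ the extra element $x-1-e$ in $B'$ is negative, so $B'=B$ and no additional vertices are removed---the paper observes this and simply reads off the general formulas rather than inspecting the complexes by hand.
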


\begin{proof}
First assume that $(m,n) \neq (2,3).$ We use \Cref{rem:Step1} to compute the number of zero-dimensional faces.

\noindent
(1) For $\lambda=2e+2m$, write $\lambda=2e+2+x$ with $x=2m-2$. For step(I), we have
\begin{align*}
    2 \leq m \leq \ll \frac{n-1}{2} \rr, & \q | A \cap \JM{m,n} \setminus B| = \big\lvert [0,2m] \bs \{m\} \big\rvert = 2m, \\
    \lc \frac{n}{2} \rc \leq m \leq \ll \frac{e-1}{2} \rr, & \q |A \cap \JM{m,n} \bs B| =  \big\lvert [0,2m] \bs \{m,n,2m-n\} \big\rvert =2m-2, \\
    \left\lceil \frac{e}{2} \right\rceil \leq m \leq e-3, & \q |A \cap \JM{m,n} \bs B|  = \big\lvert \JM{m,n} \bs \{2m-n\} \big\rvert =e-3.
\end{align*}

For step(II), we refer to the details in Appendix \ref{x=2m-5} and have
$$\im \d_1 \cong \begin{cases}
    \Z^{m}, & 2 \leq m \leq \ll \frac{n-1}{2} \rr, \\
         \Z^{m-1}, & \lc \frac{n}{2} \rc \leq m \leq \left\lfloor \frac{e-1}{2} \right\rfloor,  \\
         \Z^{m-2}, & \left\lceil \frac{e}{2} \right\rceil \leq m \leq e-3.
\end{cases}$$

In step(III), we obtain  
\[
 \boxed{\beta_{1,2e+2m}= \begin{cases}
 m-1, & 2 \leq m \leq \ll \frac{n-1}{2} \rr, \\
     m-2, & \lc \frac{n}{2} \rc \leq m \leq \ll \frac{e-1}{2} \rr, \\
     e-2-m, & \left\lceil \frac{e}{2} \right\rceil \leq m \leq e-3.
 \end{cases}} 
 \] 
 
\noindent
(2) For $\lambda=2e+m+n$, that is $x=m+n-2,$  we get the following: 
\begin{align*}
   2 \leq m \leq e-n-1, & \q |A \cap \JM{m,n} \bs B| = \big\lvert [0,m+n] \bs \{m,n\} \big\rvert = m+n-1, \\
e-n \leq m \leq e-3, & \q |A \cap \JM{m,n} \bs B| = |\JM{m,n}| = e-2.
\end{align*}

With details in Appendix \ref{x=m+n-5}, we have  
$$\im \d_1 \cong \begin{cases}
    \Z^{\ll \frac{m+n-1}{2} \rr}, & 2 \leq m \leq e-n-1, \\
    \Z^{\ll \frac{m+n-3}{2} \rr}, & e-n \leq m \leq e-3.
\end{cases}$$

Thus in step(III), we obtain  
\[ \boxed{\beta_{1,2e+m+n}= \begin{cases}
      m+n-2-\ll \frac{m+n-1}{2} \rr = \ll \frac{m+n}{2} \rr - 1, & 2 \leq m \leq e-n-1, \\
     e-3 -  \ll \frac{m+n-3}{2} \rr, & e-n \leq m \leq e-3.
 \end{cases}} 
 \] 
 
\noindent
(3) For $\lambda=2e+2n,$ that is $x=2n-2$ we have
  \begin{align*}
 3 \leq n \leq \ll \frac{e-1}{2} \rr, & \q |A \cap \JM{m,n} \bs B| = \big\lvert [0,2n] \bs \{m,n,2n-m\} \big\rvert = 2n-2, \\
\lc \frac{e}{2} \rc \leq n \leq \ll \frac{e+m-1}{2} \rr, & \q |A \cap \JM{m,n} \bs B| = \big\lvert \JM{m,n} \bs \{2n-m\} \big\rvert = e-3, \\
\lc \frac{e+m}{2} \rc \leq n \leq e-2, & \q |A \cap \JM{m,n} \bs B| = |\JM{m,n}|= e-2.
  \end{align*}

With details in Appendix \ref{x=2n-5}, we have
$$\im \d_1 \cong \begin{cases}
    \Z^{n-1}, & 3 \leq n \leq \ll \frac{e-1}{2} \rr, \\
    \Z^{n-2}, & \lc \frac{e}{2} \rc \leq n \leq  e-2.
\end{cases}$$

Thus in step(III), we obtain
\[ \boxed{\beta_{1,2e+2n}= \begin{cases}
     n-2, & 3 \leq n \leq \left\lfloor \frac{e-1}{2} \right\rfloor, \\
     e-n-2, & \left\lceil \frac{e}{2} \right\rceil \leq n \leq \ll \frac{e+m-1}{2} \rr, \\
     e-n-1, & \lc \frac{e+m}{2} \rc \leq n \leq e-2.
 \end{cases}} 
 \] 

\noindent
 Note that when $(m,n) = (2,3)$, then $B' = B \cup \{x-1-e\} = B$ as $x$ is either $2,3,$ or $4$ in the above cases. Thus, the same computations as above hold and for $e \geq 6,$ we have $\beta_{1,2e+4} = m-2 = 0,$ $\beta_{1,2e+5} = \ll \frac{5}{2} \rr - 1 = 2-1 = 1,$ and $\beta_{1,2e+6} = n-2 = 1.$
\end{proof}

\begin{theorem}\label{thm:main}
    Let $\SM{m,n}$ be a Sally type numerical semigroup where $3 \leq m < n \leq e-2$ but $(m,n) \neq (3,4)$. Then the number of minimal generators of $\IM{m,n}$ is $$\mu(\IM{m,n}) = 
    {e-2 \choose 2} - 2.$$
\end{theorem}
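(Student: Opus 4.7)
The plan is to apply Hochster's formula (\Cref{thm:hochster'sformula}) and compute
$$\mu(\IS) = \beta_1 = \sum_{\l \in \S} \b.$$
By \Cref{lem:complexisexact} only $\l \in [2e+2, 4e-2]$ contribute, and \Cref{lem:3specialcases} has already pinned down $\b$ for the three distinguished values $\l \in \{2e+2m,\ 2e+m+n,\ 2e+2n\}$. The remaining task is therefore to evaluate $\b$ for the other $2e-6$ values of $\l$ in this range and then assemble the total.

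For each such $\l = 2e+2+x$ with $x \in [0, 2e-4]$, I would follow the three-step procedure of \Cref{rem:ideaofproof}. For Step (I), \Cref{rem:Step1} reduces the count of zero-dimensional faces of $\D_\l$ to $|(\J \cap A) \setminus B|$ with $A = [0,x+2]\setminus\{m,n\}$ and $B = \{x+2-m,\ x+2-n\}$. For Step (II), I would exhibit explicit families of one-dimensional faces of $\D_\l$, namely pairs $\{e+j_1, e+j_2\}$ with $x+2-j_1-j_2 \in \S$, and invoke \Cref{lem:ranks}(3) or (4) to obtain a lower bound on $\rk M$; combined with the automatic upper bound $\rk M \leq \ell-1$ from \Cref{lem:ranks}(2), this pins down $\rk M$ exactly. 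Step (III) then delivers $\b = \ell - \rk M - 1$.

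The case analysis partitions $x \in [0, 2e-4]$ into sub-intervals determined by the position of $x+2$ relative to the landmarks $m$, $n$, $e$, $e+m$, $e+n$, $2e-1$. In each sub-interval the admissible pairs $\{j_1,j_2\}$ with $j_1+j_2 = x+2$, together with the secondary pairs $j_1+j_2 = x+2-e$ that appear once $x \geq e-1$, form a nearly vertex-disjoint matching to which \Cref{lem:ranks}(4) applies cleanly, producing $\rk M$ as a floor/ceiling expression in $(x+2)/2$ up to a small correction. The main obstacle, and where essentially all the work lives, is the bookkeeping at the boundary of each sub-interval: when $x+2$ is even the midpoint $(x+2)/2$ cannot pair with itself; the endpoints $x+2-m$ and $x+2-n$ may fall outside $\J$ or coincide with $m$ or $n$ and must be deleted from the count; and once $x \geq e-1$ the two simultaneous pairings share vertices and must be handled via \Cref{lem:ranks}(3) rather than (4). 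The hypotheses $m\ge 3$ and $(m,n)\neq(3,4)$ are precisely what is needed to prevent any of these boundary coincidences from producing additional cancellations beyond the two already accounted for by \Cref{lem:3specialcases}; the excluded cases are dealt with separately in Corollaries \ref{cor:mn=24etc} and \ref{cor:mn=23}.

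Once each $\b$ has been tabulated, I would sum the Step (III) contributions sub-interval by sub-interval, add the three values from \Cref{lem:3specialcases}, and verify that the telescoped total equals $\binom{e-2}{2}-2$. I expect each partial sum to collapse to an elementary arithmetic-progression total, so the concluding arithmetic is routine provided the enumeration is correct; in practice, the repetitive rank computations for the various ranges of $x$ will be deferred to the Appendix (in the spirit of Appendices \ref{x=2m-5}, \ref{x=m+n-5}, \ref{x=2n-5} already used in the proof of \Cref{lem:3specialcases}), with only the template calculation and the final tallying kept in the main text.
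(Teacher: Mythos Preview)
Your overall plan matches the paper's proof almost exactly: restrict to $\lambda\in[2e+2,4e-2]$ via \Cref{lem:complexisexact}, isolate the three special degrees via \Cref{lem:3specialcases}, partition the remaining $x$ into six sub-intervals at $m,n,e,e+m,e+n$, and in each compute $\b$ from the face counts.

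There is, however, a genuine gap in your treatment of the range $x\geq e-2$. You propose to use the two matchings $j_1+j_2=x+2$ and $j_1+j_2=x+2-e$ and to apply \Cref{lem:ranks}(4). But these two matchings live on disjoint vertex sets (the first on $[x+3-e,e-1]$, the second on $[0,x+2-e]$), so together they still form a matching, and a matching on $\ell$ vertices can certify rank at most $\lfloor\ell/2\rfloor$. This is not enough: for instance in case (d) with $e$ odd, $m=3$, $x=e$ one has $\ell=e-4$ and the required rank is $(e-3)/2>\lfloor(e-4)/2\rfloor$. Your remark that the two pairings ``share vertices and must be handled via \Cref{lem:ranks}(3)'' is therefore based on a mis-count; they do not overlap, and (4) applied to their union falls short.

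What the paper actually does for $x\geq e-2$ is replace your secondary matching by the \emph{star} of edges $\{e,e+j\}$ for $j\in[1,x+2-e]$ (minus a few forbidden $j$), to which \Cref{lem:ranks}(3) applies and which contributes $x+2-e$ to the rank rather than roughly half that. These star edges connect the entire ``head'' interval $[0,x+2-e]$ into a single component, after which the primary matching on the ``tail'' $[x+3-e,e-1]$ accounts for the rest. The hypotheses $m\geq 3$ and $(m,n)\neq(3,4)$ enter exactly when one must replace a missing star edge $\{e,e+(x+2-e-m)\}$ or $\{e,e+(x+2-e-n)\}$ by an alternative edge $\{e+y,\cdot\}$ with $y\in[1,m-1]$; see equations \eqref{eq:(e)chooseyform=2}--\eqref{eq:(f)pickyformn=34} in the paper. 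Once you make this substitution the rest of your outline goes through as written.
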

\begin{proof}
     \Cref{lem:complexisexact} implies that $\b =0$ for $\l \leq 2e+1$ and $\l \geq 4e-1$, and \Cref{lem:3specialcases} gives us $\b$ for $\l \in \{2e+2m,2e+m+n,2e+2n\}$. So suppose $ 2e+2 \leq \l \leq 4e-2$, $\l \not\in \{2e+2m, 2e+m+n,2e+2n\}.$ Write $\l = 2e+2 +x$, where $0 \leq x \leq 2e-4$, and $x \not\in \{2m-2,m+n-2,2n-2\}.$ We compute the number of zero-dimensional faces in $\D_\l$ which is the  step(I) of \Cref{rem:ideaofproof}, using \Cref{rem:Step1} and consider each of the following cases separately. 
 \begin{enumerate}[(a)]
     \item If $0 \leq x \leq m-3$, then $x+2 \leq m-1$ and so
     $$|A \cap \JM{m,n} \bs B| = |A| = \big\lvert [0,x+2] \big\rvert =x+3.$$
     \item If $m-2 \leq x \leq n-3,$ then as $x \neq 2m-2,$ we get
     $$|A \cap \JM{m,n} \bs B| = \big\lvert A \bs \{x+2-m\} \big\rvert = \big\lvert [0,x+2]  \bs \{m,x+2-m\} \big\rvert =x+1.$$
     \item If $n-2 \leq x \leq e-3,$ then as $x \not\in \{2m-2,m+n-2,2n-2\},$ we get
     $$|A \cap \JM{m,n} \bs B| = |A \bs B| = \big\lvert [0,x+2] \bs \{m,n, x+2-m,x+2-n\} \big\rvert =x-1.$$
     \item If $e-2 \leq x \leq e+(m-1)-2,$ then $$|A \cap \JM{m,n} \bs B| = |\JM{m,n} \bs B| = e-4.$$
     \item If $e+m-2 \leq x \leq e+(n-1)-2,$ then $$|A \cap \JM{m,n} \bs B| = |\JM{m,n} \bs \{x+2-n\}| = e-3.$$
     \item If $e+n-2 \leq x \leq e+(e-4),$ then $$|A \cap \JM{m,n} \bs B| = |\JM{m,n}| = e-2.$$
 \end{enumerate}
 
Next, we compute the rank of the matrix $M$ as step(II) of \Cref{rem:ideaofproof} . Note that any one-dimensional face is a superset of some zero-dimensional face. Set $\I = (\JM{m,n} \cap A) \bs B.$ When $0 \leq x \leq e-3$, for $j<l \in \I$, $$\l -(e+j)-(e+l) = x+2-j-l < e-1.$$
 So the only one-dimensional faces in this case are $\{e+j,e+l\}$ for some $j<l \in \I$ with $j+l=x+2$. In essence, we pair up the zero-dimensional faces satisfying this condition to get one-dimensional faces. Thus, we have the following: 
 \begin{enumerate}[(a)]
     \item for $0 \leq x \leq m-3$, $\rk M = \ll \frac{x+3}{2} \rr,$ 
     \item for $m-2 \leq x \leq n-3,$ $\rk M = \ll \frac{x+1}{2} \rr,$ 
     \item for $n-2 \leq x \leq e-3,$ $\rk M = \ll \frac{x-1}{2} \rr.$ 
 \end{enumerate}
 
When $e-2 \leq x \leq 2e-4,$ for one-dimensional faces of $\D_\l$ of the form $\{e,e+j\},$ $j \in \I,$ we must have 
\begin{align*}
\{e,e+j\} \in \D_\l &\iff  \l -e-(e+j) = x+2-j \in \SM{m,n}\\
&\iff x+2-j \geq e, \ x+2-j \neq e+m, \text{ and } x+2-j \neq e+n \\
& \iff j \in \left([1,x+2-e] \cap \I \right) \setminus \{ x+2-e-m, x+2-e-n \}.
\end{align*}

For (d), when $x=e-2$, no such face is possible. When $e-2 < x \leq e + (m-1) -2$, none of $m,n,x+2-e-m,x+2-e-n,$ and elements of $B$ are in $[1,x+2-e],$ giving us $x+2-e$ such faces. Thus, $\rk M \geq x+2-e$.

For (e), if $x=e+m-2,$ then none of $n,x+2-n, x+2-e-n,x+2-e-m=0 $ are in $[1,x+2-e]$, but $m$ is. If $x>e+m-2,$ then none of $n,x+2-n, x+2-e-n$ are in $[1,x+2-e]$ but $m$ and $x+2-e-m$ are. If $x+2-e-m=m$, we get $x+1-e$ faces. Otherwise, choosing 
\begin{equation}\label{eq:(e)chooseyform=2}
    y \in [1,m-1] \setminus \{x+2-e-m\}
\end{equation} the face $\{e+y,e+(x+2-e-m)\}$ increases $\rk M$ by one. Thus, $\rk M \geq x+1-e$.

For (f), we begin by assuming $x \notin \{e+2m-2,e+2n-2,e+m+n-2\}.$
If $x=e+n-2$, then $m,n,x+2-e-m$ are all distinct elements of $[1,x+2-e]$, so $\rk M\geq x-1-e.$ Pick 
\begin{equation}\label{eq:(f)chooseyfor(m,n)=(2,3)}
    y\in [1,m-1] \bs \{x+2-e-m\} \, (\neq \emptyset \text{ as } (m,n) \neq (2,3))
\end{equation}
to get an additional one dimensional face $\{e+y,e+(x+2-e-m)\},$ which implies that $\rk M \geq x-e$.

If $x>e+n-2$, then $m,n,x+2-e-m,x+2-e-n$ are all distinct in $[1,x+2-e]$ so $\rk M \geq x-2-e$. Since $\{e,e+(x+2-e-m)\}$ and $\{e,e+(x+2-e-n)\}$ are not one dimensional faces of $\D_\l$, pick 
\begin{equation}\label{eq:(f)pickyform=2ormn=34}
    y \in [1,m-1] \bs \{x+2-e-m,x+2-e-n\}
\end{equation}

to get the additional faces $\{e+y,e+(x+2-e-m)\}, \{e+y,e+(x+2-e-n)\}$, implying $\rk M \geq x-e$. 

 We now look at the special cases, $x \in \{e+2m-2,e+2n-2,e+m+n-2\}.$
If $x=e+2m-2$, then $x+2-e-m=m$ and hence $\rk M\geq x-1-e.$ In this case, pick 
\begin{equation}\label{eq:(f)pickyformn=34}
    y \in [1,m-1] \bs \{n-m, x+2-e-n\}
\end{equation}

to get $\{e+y,e+(x+2-e-n)\} \in \D_\l$ which implies that $\rk M \geq x-e.$ 
Similarly, if $x=e+2n-2$, then $x+2-e-n=n$, so $\rk M\geq x-1-e$ and we pick $y \in [1,m-1] \bs \{x+2-e-m\}$ to get $\rk M \geq x-e.$
If $x=e+m+n-2$, then $m=x+2-e-n$ and $n=x+2-e-m$ so $\rk M\geq x-e.$

To conclude, in part (f) we have $\rk M \geq x-e$.

We now look for any other one-dimensional faces that add to the rank of $M.$ In particular, the faces of the form $\{e+j,e+l\}$ where $j,l \in \JM{m,n} \setminus \{0\}$ with $j \geq x+2-e+1$. This gives us the following $\ll \frac{2e-x-3}{2} \rr$  pairs of $(j,l)$:

\begin{figure}[!h]
    \centering
\begin{tikzpicture}
    \node at (4,3) {$x+2-e+1 < x+2-e+2 < \cdots < e-2 < e-1$};
    \draw (3.5,3.25) -- (3.5,3.50);
   \draw (3.5,3.50) -- (6.5,3.50);
    \draw (6.5,3.50) -- (6.5,3.25);
    \draw (1,2.75) -- (1,2.5);
    \draw (1,2.5) -- (7.75,2.5);
    \draw (7.75,2.5) -- (7.75, 2.75);
\end{tikzpicture}
\end{figure}

In (d), $m,n,x+2-m,x+2-n$ are in the above sequence so we get $\ll \frac{2e-x-3}{2} \rr-2$ faces. In (e), $n$ and $x+5-n$ are in the sequence but $m$ and $x+5-m$ are not, giving $\ll \frac{2e-x-3}{2} \rr-1$ faces. Finally, in (f), none of $m,n,x+5-n,x+5-m$ are in the sequence, so we get $\ll \frac{2e-x-3}{2} \rr$ faces.

Thus, in all of the cases (d), (e), and (f) above, that is, for $e-2 \leq x \leq 2e-4,$ we get $\rk M = \ll \frac{x-3}{2} \rr $.\\

Now we substitute $x=\l-2e-2$ and subtract 1 from the difference of 0 dimensional faces and rank of $M$ as step (III)  to compute the respective $\b$. 
\begin{align}  \label{mainBox}
\boxed{\beta_{1,\l}= 
\begin{cases}
    \ll \frac{\lambda}{2} \rr-e, & 2e+2 \leq \l \leq 2e+m-1, \\
    \ll \frac{\lambda}{2} \rr-e-1, & 2e+m \leq \l \leq 2e+n-1, \\
    \ll \frac{\lambda}{2} \rr-e-2, & 2e+n \leq \l \leq 3e-1, \\
    2e-3 - \ll \frac{\lambda-1}{2} \rr, & 3e \leq \l \leq 3e+m-1, \\
   2e -2 - \ll \frac{\lambda-1}{2} \rr, & 3e+m \leq \l \leq  3e +n-1, \\
   2e-1 - \ll \frac{\lambda-1}{2} \rr , & 3e+n \leq \l \leq 4e-2.
 \end{cases}} 
 \end{align} 

To complete the proof, we add the required $\b$ from the box above and \Cref{lem:3specialcases}. Note that from box \ref{mainBox}, we get  
\[ \sum_{\l=2e+2}^{4e-2} \b  = \frac{e^2}{2} - \frac{5e}{2}  = {e-2 \choose 2} -3,  \]
see Appendix \ref{sum-betti-part2} for details.

Now note that the three special cases of $\l$ computed in \Cref{lem:3specialcases} still appear in the intervals in $\Cref{mainBox}$. To get the final $\beta_1$ we only need to subtract the value of $\b$ from box \ref{mainBox} for $\l \in \{2e+2m,2e+m+n,2e+2n\}$ and add the corresponding $\b$ from \Cref{lem:3specialcases}.  We will now show that the cumulative difference corresponding to these values of $\l$'s is $1$, which gives us the conclusion of our theorem.

Observe that $\beta_{1, 2e+2m}$ and $\beta_{1, 2e+2n}$ computed above and in \Cref{lem:3specialcases} are the same. The reason for computing them separately is that the numbers of zero-dimensional faces and the two kinds of one-dimensional faces contributing to $\rk M$ differ from those in Part 2, despite the final difference being the same. 
So, the only contribution is made by $\beta_{1,2e+m+n}$. Note that $2e+m+n$ can only satisfy the inequalities in (c) and (d) above. If $2e+m+n$ satisfies (c), i.e., $2e+n \leq 2e+m+n \leq 3e-1$, then $m \leq e-n-1$ giving us
$$m+n-2 - \ll \frac{m+n-1}{2} \rr - \left(\ll \frac{2e+m+n-1}{2} \rr -e-2 \right) = 1.$$  
If $3e \leq 2e+m+n \leq 3e+m-1$, then $m \geq e-n$ implying that the difference 
$$e-3- \ll \frac{m+n-3}{2} \rr - \left(2e-3- \ll \frac{2e+m+n-1}{2} \rr \right) =1.$$
\end{proof}

\begin{corollary}\label{cor:mn=24etc}
    Suppose $m=2$ or $(m,n)=(3,4)$ but $(m,n) \neq (2,3)$. Then 
    $$\mu(\IM{m,n}) =\begin{cases}
    {e-2 \choose 2}, &  {\rm if} \ (m,n) \in \{(2,4),(2,5),(3,4)\}, \\
    {e-2 \choose 2} -1, & {\rm if} \ m=2 \text{ and } n \geq 6.
    \end{cases} $$
\end{corollary}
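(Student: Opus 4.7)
The plan is to re-run the proof of \Cref{thm:main}, carefully tracking the $\l$ at which the choice of $y$ in the pick-$y$ steps fails because the set $[1, m-1]$ is too small. For $m = 2$ the singleton $[1, m-1] = \{1\}$ leaves no choice whenever the forbidden set contains $1$; for $(m, n) = (3, 4)$ the two-element set $\{1, 2\}$ is entirely eliminated when the two-element forbidden set equals $\{1, 2\}$. At each such $\l$, I would compute $\b$ directly via Hochster's Formula (\Cref{thm:hochster'sformula}), express it as the generic value from \eqref{mainBox} plus a nonnegative correction, and sum the corrections against the total $\binom{e-2}{2} - 2$ from \Cref{thm:main}.

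For $m = 2$, step \eqref{eq:(e)chooseyform=2} in case (e) fails precisely at $x = e + 1$, i.e., $\l = 3e + 3$, which lies in the range of case (e) iff $n \geq 4$; a direct face count yields a correction of $+1$ there. Step \eqref{eq:(f)pickyform=2ormn=34} in case (f) fails at $x = e + n - 1$, i.e., $\l = 3e + n + 1$, where the lost rank is partially offset by compensating faces of the form $\{e + 1, e + j\}$ with $j \in [3, n-1] \setminus \{n - 2\}$ (these lie in $\D_\l$ because $\l - (e+1) - (e+j) = e + (n - j) \in \S$). By \Cref{lem:ranks}(3)--(4) they contribute independently to $\rk{\delta_1}$, and the admissible set has size $1$ when $n \in \{4, 5\}$ and size $\geq 2$ when $n \geq 6$. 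Hence the correction at $\l = 3e + n + 1$ is $+1$ when $n \in \{4, 5\}$ and $0$ when $n \geq 6$, which combined with the case-(e) correction produces total extras $+2$ and $+1$ respectively.

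For $(m, n) = (3, 4)$, the failures occur in case (f) at $x = e + 3$ (via \eqref{eq:(f)pickyform=2ormn=34}, where the forbidden set is $\{2, 1\}$) and at the special value $x = e + 2m - 2 = e + 4$ (via \eqref{eq:(f)pickyformn=34}, where the forbidden set is $\{1, 2\}$); each contributes a correction of $+1$, for a total of $+2$. When a correcting $\l$ falls outside the range $[2e + 2, 4e - 2]$ of \Cref{thm:main}---this happens for $m = 2$ when $n = e - 2$ (so $\l = 3e + n + 1 = 4e - 1$) and for $(3, 4)$ when $e \in \{6, 7\}$---the same failure reappears inside \Cref{lem:complexisexact}: the two replacements constructed in its proof coincide under the same constraints, so $\b = 1$ rather than $0$ at $\l = 4e - 1$ (and also at $\l = 4e$ for $(3, 4), e = 6$). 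The chief obstacle is the case-(f) bookkeeping for $m = 2$: one must verify both the existence of the compensating $\{e+1, e+j\}$ faces and their $\Z$-linear independence from the $e$-faces via \Cref{lem:ranks}, to certify that the rank shortfall is exactly $1$, not $2$, when $n \in \{4, 5\}$.
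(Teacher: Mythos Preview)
Your approach is essentially the same as the paper's: both identify the precise values of $x$ at which the pick-$y$ sets \eqref{eq:(e)chooseyform=2}, \eqref{eq:(f)pickyform=2ormn=34}, and \eqref{eq:(f)pickyformn=34} become empty, and both repair the rank shortfall at $x=e+n-1$ using faces $\{e+1,e+j\}$ sharing the vertex $e+1$. Your compensating set $[3,n-1]\setminus\{n-2\}$ is exactly the paper's $\{n-1\}\cup\bigl([3,n-3]\bigr)$, written more uniformly; the size dichotomy ($1$ for $n\in\{4,5\}$, $\geq 2$ for $n\geq 6$) is the same.

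The one genuine addition in your proposal is the boundary analysis when the correcting value of $\l$ exceeds $4e-2$, which the paper does not address (it implicitly treats $e$ as large enough that $e+n-1$ and $e+2m-2$ stay inside case~(f)). Your observation that the same collision of replacement faces reappears in the proof of \Cref{lem:complexisexact} is correct---for instance, at $(m,n)=(2,4)$, $e=6$, $\l=4e-1$ one checks directly that $\beta_{1,\l}=1$---so this is a real edge case the paper glosses over. That said, your treatment of it is only a sketch; to make it rigorous you would need to verify for each small-$e$ exception that the correction exactly matches what is lost from \Cref{lem:complexisexact}, rather than asserting it by analogy.
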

\begin{proof}
    The proof differs from that of \Cref{thm:main} only in the following instances:

    In part (e), when $m=2$ and $x=e+1,$ the additional pair obtained from the choice of $y$ in $\eqref{eq:(e)chooseyform=2}$ does not exist. Thus, $\rk M \geq x-e$ when $m=2$ and $x=e+1.$

    In part (f), if $(m=2 \text{ or } (m,n)=(3,4)) \text{ and } (x=e+n-1)$, the set in the equation \eqref{eq:(f)pickyform=2ormn=34} is empty. But, as $x+2-e-n=1$, $\{e+1=e+(x+2-e-n), e+(x+2-e-m)\} \in \D_\l$ giving us $\rk M \geq x-1-e$. When $(m,n) \in \{(2,4),(2,5),(3,4)\}$, no other one-dimensional face contains $e+(x+2-e-m)$ or $e+(x+2-e-n).$ But, when $m=2$ and $n \geq 6$, pick $y' \in [1,n-3] \bs \{1,2\}$ to get a one-dimensional face $\{e+y',e+(x+2-e-n)\}$ that adds an additional one to $\rk M$. 

    Also, in part (f), if $(m,n)=(3,4)$ and $x = e+2m-2 = e+4,$ then the additional face, as chosen using $\eqref{eq:(f)pickyformn=34}$ doesn't exist and so in this case, $\rk M \geq x-1-e.$

    In conclusion, to the $\mu(\IM{m,n})$ obtained in \Cref{thm:main}, an additional 2 is added when $(m,n) \in \{ (2,4), (2,5), (3,4)\}$ and an additional $1$ is added when $m=2$ and $n \geq 6$. 
\end{proof}

\begin{corollary}\label{cor:mn=23}
    When $(m,n)=(2,3)$, $\mu(\IM{m,n})= {e - 2 \choose 2} -1$.
\end{corollary}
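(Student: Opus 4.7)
The plan is to rerun the analysis of \Cref{thm:main} in the degenerate setting $(m,n)=(2,3)$, accounting for two structural changes: the set $B$ of \Cref{rem:Step1} is replaced by $B'=\{x,x-1,x-1-e\}$ because the Frobenius number is $F=2e+3$ (\Cref{prop:Frob of semigroup}), and the boundary values $m=2$, $n=3$ shift (and sometimes empty) several of the six sub-cases (a)--(f) of that proof.

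By \Cref{lem:complexisexact}, $\beta_{1,\lambda}=0$ outside $[2e+2,4e-2]$, and the three values $\lambda\in\{2e+4,2e+5,2e+6\}=\{2e+2m,2e+m+n,2e+2n\}$ are supplied by the final paragraph of \Cref{lem:3specialcases}. For the remaining $\lambda=2e+2+x$, I would mimic cases (a)--(f) of \Cref{thm:main}: case (a) is empty, (b) collapses to $x=0$ and (e) to $x=e$ (each handled directly by counting the vertices and edges of $\Delta_\lambda$ via \Cref{lem:ranks}), while (c), (d), and (f) proceed as in \Cref{thm:main} except that the extra element $x-1-e\in B'$ deletes the vertex $e+(x-1-e)$ from $\Delta_\lambda$ precisely when $x\in[e+1,2e-4]\setminus\{e+3,e+4\}$ (the two exceptional $x$'s being those where $x-1-e\in\{2,3\}\notin\mathcal J_{e,2,3}$, so no deletion occurs). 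In every instance where a deletion does occur, I would invoke \Cref{lem:ranks} to exhibit a matching one-dimensional face that also disappears, so that $\beta_{1,\lambda}$ stays equal to its ``generic'' value.

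Assembling the sum $\sum_\lambda \beta_{1,\lambda}$, the contributions from $\lambda\neq 2e+5$ agree with those computed in \Cref{thm:main}, while $\beta_{1,2e+5}=1$ contributes one more than the generic prediction for that $\lambda$. This yields
$$\mu(I_{e,2,3})=\binom{e-2}{2}-2+1=\binom{e-2}{2}-1.$$
For the edge case $e=5$, one can instead check directly that $S_{5,2,3}=\langle 5,6,9\rangle$ has defining ideal generated by $X_1^3-X_2X_3$ and $X_2^3-X_3^2$, giving $\mu(I_{5,2,3})=2=\binom{3}{2}-1$.

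The main obstacle is the rank bookkeeping in the middle step: for each $x\in[e+1,2e-4]\setminus\{e+3,e+4\}$, one must pinpoint via \Cref{lem:ranks} a specific $1$-face of $\Delta_\lambda$ containing $e+(x-1-e)$ whose removal exactly compensates the drop in the zero-face count, and check that the remaining rank computation from \Cref{thm:main} still goes through. The boundary sub-cases $x=e+1,e+2$ (where $x-1-e\in\{0,1\}$) and $x=2e-4$ (top of the range) need individual verification, as does the single-value case $x=e$ in (e), which now coincides with the Frobenius-boundary $x-1-e=e-1\in\mathcal J_{e,2,3}$.
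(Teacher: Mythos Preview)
Your overall strategy—rerun the case analysis of \Cref{thm:main} with the modified set $B'$ and the collapsed sub-cases—is exactly the paper's approach, and your key observation that each vertex deletion in case~(f) is offset by a matching drop in $\rk M$ is correct and is precisely what the paper argues.

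However, your final accounting misidentifies the source of the extra $+1$. The correction at $\lambda=2e+5=2e+m+n$ is \emph{already} built into the conclusion of \Cref{thm:main}: the last paragraph of that proof replaces the box value at $2e+m+n$ by the \Cref{lem:3specialcases} value, and this contributes exactly the $+1$ that turns $\binom{e-2}{2}-3$ into $\binom{e-2}{2}-2$. So $\beta_{1,2e+5}$ does not contribute ``one more'' relative to \Cref{thm:main}'s total—it contributes the same.

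The genuine $+1$ comes from $\lambda=2e+3$, which you mention but never cash in. Since $2e+3$ is the Frobenius number, it is not in $S_{e,2,3}$ and contributes nothing to $\mu$; yet it lies in range~(c) of box~\eqref{mainBox}, where the formula gives $\lfloor(2e+3)/2\rfloor-e-2=-1$. Excising this spurious $-1$ from the box sum is what raises the total by one. The paper packages this differently (``subtract $\beta_{1,2e+3}$ and add $2$ as in \Cref{cor:mn=24etc}''), separating the two $m=2$ rank deficiencies from the $B'$-deletions rather than cancelling them as you do, but the net effect is the same $+1$, and it has nothing to do with $\lambda=2e+5$.

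A minor slip: your remark that ``$x=e$ in (e) coincides with $x-1-e=e-1$'' is wrong; for $x=e$ one has $x-1-e=-1$, so there is no $B'$-deletion in case~(e) at all.
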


\begin{proof}
    From \Cref{lem:complexisexact}, we know that $\b=0$ for $\l \leq 2e+1$ and for $\l \geq 4e-1.$ Note that in \Cref{rem:Step1}, $B'=B \cup \{x-1-e\}=B$ for $x \leq e.$ Thus, for $2e+2 \leq \l \leq 3e+2$, the calculations of $\b$ are as in \Cref{thm:main}. This takes care of parts $(a)-(e)$ above. One needs to remember here that $2e+3$ is the Frobenius number in this case (see \Cref{prop:Frob of semigroup}). So, $\beta_{1,2e+3}$ is not included in the sum.

    Now, in part (f), that is, for $e+1 \leq x \leq 2e-4,$ using \Cref{rem:Step1} we obtain $\{e+j\} \in \D_\l$ for $j \in  \JM{m,n} \bs \{x-1-e\}$, giving us $e-3$ zero-dimensional faces. However, when $x \in \{e+3,e+4\}$, $x-1-e \not\in \JM{m,n},$ giving us $e-2$ zero-dimensional faces. 
    
    When $x \in \{e+3,e+4\}$, the computation for $\rk M$ works as in the proof of part (f) in \Cref{thm:main}, thus giving us the same value of $\b.$ When $x \not\in \{e+3,e+4\}$ and $e+1 \leq x \leq 2e-4$, the one-dimensional faces of the form $\{e+j,e+l\},$ where $j+l=x+2$, remain the same. But as $\{e,e+(x-1-e)\} \notin \D_\l$ in this case, there are only $x-e-1$ one-dimensional faces of the form $\{e,e+j\}$ now. However, while computing the $\b$, we get the same number as in part (f) of \Cref{thm:main} since the difference (as in step(III)) remains unchanged. Basically, the number of zero-dimensional faces decreases by one, but so does $\rk M$. 

    All this amounts to subtracting $1= \beta_{1,2e+3}$ from $\mu(\IM{m,n})$ obtained in \Cref{thm:main} and adding 2 as we saw in the proof of \Cref{cor:mn=24etc}, thus giving us the required result.
\end{proof}

Combining the above results, we present our main theorem: a concise formula for the minimal number of generators of the defining ideal for a Sally type numerical semigroup $\SM{m,n}$.

\maintheorem
\begin{proof}
    This follows directly from \Cref{thm:main}, \Cref{cor:mn=24etc}, and \Cref{cor:mn=23}.
\end{proof}

\begin{corollary}\label{widthconj}
    Sally type numerical semigroups of the form $\SM{m,n}$ satisfy Herzog and Stamate's width conjecture, i.e., $$\mu(\IM{m,n}) \leq {\wid{(\SM{m,n})} +1 \choose 2}.$$ 
    
\end{corollary}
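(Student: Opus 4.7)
The proof plan is essentially a one-line comparison, since all the heavy lifting is already done in \Cref{thm:maintheorem}.

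First, I would compute the width of $\S$. By definition, $\S$ is minimally generated by $\cS = \{e, e+1, \ldots, 2e-1\} \setminus \{e+m, e+n\}$ with $2 \leq m < n \leq e-2$. The smallest generator is $e$ and the largest is $2e-1$ (since $n \leq e-2$ forces $2e-1 \in \cS$). Therefore $\wid(\S) = (2e-1) - e = e-1$, and so
\[
\binom{\wid(\S)+1}{2} = \binom{e}{2}.
\]

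Next, I would invoke \Cref{thm:maintheorem} to bound $\mu(\IS)$ from above. In every case of the piecewise formula \eqref{eq:maintheorem}, the value of $\mu(\IS)$ is at most $\binom{e-2}{2}$; the extremal case is $(m,n) \in \{(2,4),(2,5),(3,4)\}$ where equality $\mu(\IS) = \binom{e-2}{2}$ is achieved, and in all other cases $\mu(\IS)$ is strictly smaller.

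Finally, since $e-2 \leq e$, the monotonicity of binomial coefficients in their upper argument (with fixed lower argument $2$) gives $\binom{e-2}{2} \leq \binom{e}{2}$. Chaining the two inequalities yields
\[
\mu(\IS) \leq \binom{e-2}{2} \leq \binom{e}{2} = \binom{\wid(\S)+1}{2},
\]
as required. There is no serious obstacle here: once \Cref{thm:maintheorem} is in hand and the width is identified as $e-1$, the conjecture follows from a trivial comparison of binomial coefficients.
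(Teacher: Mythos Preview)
Your proposal is correct and matches the paper's own proof essentially line for line: compute $\wid(\S)=e-1$, invoke \Cref{thm:maintheorem} to get $\mu(\IS)\le\binom{e-2}{2}$, and then compare with $\binom{e}{2}=\binom{\wid(\S)+1}{2}$. There is nothing to add.
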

\begin{proof} This can be seen using \Cref{thm:maintheorem} and the fact that $\wid{(\SM{m,n})} = 2e-1-e=e-1$.
Thus, 
\[\mu(\IM{m,n}) \leq {e-2 \choose 2 }\leq {\wid{(\SM{m,n})} +1 \choose 2}. \hskip\fill\qedhere \]
\end{proof}

\section{A minimal Generating Set for $\IM{m,n}$}
\label{section:minimalgenerators}
Recall that for a field $k,$ the semigroup ring $k[\SM{m,n}] \cong k[X_i \mid i \in \JM{m,n}]/\IM{m,n},$ where $\IM{m,n}$ is the defining ideal. In the previous section, we computed $\mu(\IM{m,n}),$ the number of minimal generators of $\IM{m,n}.$ In this section, we give a list of a minimal generating set for this defining ideal for $e \geq 10.$

In the polynomial ring $R:=k[X_0,X_1,\ldots, X_{e-1}],$ with grading on the variables given by $\deg X_i = e+i$ for all $i \in [0,e-1]$, consider the following binomials:

\vspace{-1cm}
\begin{multicols}{2}
    \begin{align*}
    \fsy_1(j,k) &= X_{k+1}X_{j}-X_{k}X_{j+1} \\
    \fsy_2(j) &= X_jX_0^2-X_{j+1}X_{e-1} \\
    \fsy_3(j)  &= X_{m+2}X_{j-4}-X_{m-2}X_{j}  \\ 
    \fsy_4(j)  &= X_{m-1}X_{j+2}-X_{m+1}X_j   \\
    \fsy_5(j)  &= X_{n-1}X_{j+2}-X_{n+1}X_j   \\
    \fsy_6(j,k)  &= X_0X_jX_{m-1}-X_{m+2+k}X_{e-3+j-k} \\
    \fsy_7(j,k)  &= X_0X_{1+k}X_{n-1-k}-X_{n+2-j}X_{e-2+j} \\
    \fsy_8(j)  &= X_0^2X_{n-1}-X_{n+2-j}X_{e-3+j}
    \end{align*}
    \vfill
    \columnbreak
    
    \begin{align*}
    \fsy_9  &= X_{m-1}^2-X_{m-4}X_{m+2} \\
    \fsy_{10}  &= X_{m+2}X_{e-4}-X_{m-1}X_{e-1} \\
    \fsy_{11}  &= X_{m+1}X_{n-2}-X_{m-2}X_{n+1}  \\
    \fsy_{12}  &= X_{m+2}X_{n-1}-X_{m-1}X_{n+2}   \\
    \fsy_{13,a} &= X_1^3-X_6X_{e-2} \\
    \fsy_{13,b} &= X_1^3-X_4X_{e-1}   \\
    \fsy_{14} &= X_1^2X_{n-1}-X_{n+2}X_{e-1} \\
    \fsy_{15}(j)&=X_2^2X_{j+1}-X_{6+j}X_{e-1}
\end{align*}
\end{multicols}

Based on the values of $m$ and $n$, not all of these binomials exist in the polynomial ring $k[X_i \mid i \in \JM{m,n}]$. However, for each pair $(m,n),$ $2 \leq m < n \leq e-2$, there exists a subset of these binomials that generates $\IM{m,n}$ for that pair. Denote this set of minimal generators by $\cIM{m,n}$. In the following subsections, we explicitly write out the set $\cIM{m,n}$ for all $2 \leq m < n \leq e-2$, and prove in \Cref{thm:mingens} that it indeed minimally generates $\IM{m,n}$.

\subsection*{5.1}  $4 \leq m < n=m+1 \leq e-2$. 
$\cIM{m,m+1}$ consists of 

\begin{itemize}
    \item[(a)] ${e-4 \choose 2}$ elements $\{\fsy_1(j,k) \mid k<j \in [0,e-2] \bs [m-1,m+1] \},$ 
    \item[(b)] $e-4$ elements $\{\fsy_2(j) \mid j \in [0,e-2] \bs [m-1,m+1] \},$
    \item[(c)] 
    \begin{tabular}[t]{ll}
        $e-10 \text{ elements } \{\fsy_3(j) \mid j \in [4,m-1] \cup [m+6, e-1] \}$, & if $m \leq e-6$  \\
        $ e-9 \text{ elements } \{\fsy_3(j) \mid j \in [4, e-6] \}$, & if $m=e-5 $ \\
        $e-8 \text{ elements } \{\fsy_3(j) \mid j \in [4, e-5]\}$, & if $m=e-4$ \\
        $e-7 \text{ elements } \{\fsy_3(j) \mid j \in [4, e-4] \}$,  & if $m=e-3,$
    \end{tabular}
    \item[(d)] 
    \begin{tabular}[t]{ll}
        $5 \text{ elements } \{\fsy_6(0,0), \fsy_6(1,0), \fsy_7(0,1), \fsy_9,\fsy_{10} \}$, &  if $m \leq e-6$  \\
        $4 \text{ elements } \{\fsy_6(0,0), \fsy_6(1,0), \fsy_7(0,1), \fsy_9\}$, & if $m = e-5$ \\
        $3 \text{ elements } \{\fsy_6(1,0), \fsy_7(0,1), \fsy_9 \}$, & if $m =e-4$ \\
        $2 \text{ elements } \{\fsy_9,\fsy_{10}\}$,  & if $m=e-3.$
   \end{tabular}
\end{itemize}

\subsection*{5.2} $3 \leq m < n=m+2 \leq e-2$. 
$\cIM{m,m+2}$ consists of 

\begin{itemize}
    \item[(a)] ${e-5 \choose 2}$ elements $\{\fsy_1(j,k) \mid k <  j \in [0,e-2] \bs [m-1,m+2] \},$
    \item[(b)] $e-5$ elements $\{\fsy_2(j) \mid j \in [0,e-2] \bs [m-1,m+2]\},$
    
    \smallskip
    \item[(c)] 
    \begin{tabular}[t]{ll}
    $ \begin{cases}
    e-6 \text{ elements } \{\fsy_4(j) \mid j \in [0,e-3] \bs \{m-2,m-1,m,m+2\}\} & \\   
    e-7 \text{ elements } \{\fsy_5(j) \mid j \in [0,e-3] \bs [m-2,m+2] \}, 
    \end{cases} $
    &  if $m \leq e-5$ \vspace{1mm}\\
    \smallskip

    $ \begin{cases}
    e-5 \text{ elements } \{\fsy_4(j) \mid j \in [0,e-3] \bs [m-2,m]\} &  \\
    e-6 \text{ elements } \{\fsy_5(j) \mid j \in [0,e-3] \bs [m-2,m+1]\}, 
    \end{cases} $
    &  if $m=e-4,$ \\ 
    \end{tabular}
    \item[(d)] 
    \begin{tabular}[t]{ll}
        $4 \text{ elements } \{\fsy_6(0,-1), \fsy_6(1,-1), \fsy_7(0,0), \fsy_8(0)\}$, & if $m \leq e-6$ \\
        $4 \text{ elements } \{\fsy_6(0,-1), \fsy_6(1,-1), \fsy_7(0,0), \fsy_8(1)\}$, & if $m =e-5$ \\
        $2 \text{ elements } \{\fsy_6(1,-1), \fsy_7(1,0)\}$, & if $m =e-4$.
    \end{tabular}
\end{itemize}

\subsection*{5.3}  $3 \leq m < n-2 \leq e-4$. 
$\cIM{m,n}$ consists of 

\begin{itemize}
    \item[(a)] ${e-5 \choose 2}$ elements $\{\fsy_1(j,k) \mid k<j \in [0,e-2] \bs \{m-1,m,n-1,n\}\},$
    \item[(b)] $e-5$ elements $\{\fsy_2(j) \mid j \in [0,e-2] \bs \{m-1,m,n-1,n\}\},$

    \smallskip
    \item[(c)] 
    \begin{tabular}[t]{ll}
    \hskip-3mm
    $\begin{cases}
    e-7 \text{ elements } \{ \fsy_4(j) \mid j \in [0,e-3] \bs \{ m-2,m-1,m,n-2,n \} \} & \\
    e-8 \text{ elements } \{\fsy_5(j) \mid j \in [0,m-3] \cup [m+1,n-3] \cup [n+1, e-3]\}, 
    \end{cases} $
    & \hskip-2mm if $n \leq e-3$ \vspace{1mm}\\ 
    \smallskip
    
    \hskip-3mm
    $\begin{cases}
    e-6 \text{ elements } \{\fsy_4(j) \mid j \in [0,e-3] \bs \{m-2,m-1,m,n-2\}\} & \\
    e-7 \text{ elements } \{\fsy_5(j) \mid j \in [0,e-3] \bs \{m-2,m-1,m,n-2,n-1\}\}, 
    \end{cases} $
    & \hskip-2mm  if $n = e-2$, \\ 
    \end{tabular}
    \item[(d)]
    \begin{tabular}[t]{ll}
        $6 \text{ elements } \{\fsy_6(0,0), \fsy_6(1,0), \fsy_7(0,0), \fsy_8(0), \fsy_{11},\fsy_{12}\}$, & if $n \leq e-4$ \\
        $6 \text{ elements } \{\fsy_6(0,-1), \fsy_6(1,-1), \fsy_7(0,0), \fsy_8(1), \fsy_{11},\fsy_{12}\}$, & if $n =e-3$ \\
        $4 \text{ elements } \{\fsy_6(0,0),\fsy_6(1,-1), \fsy_7(1,0), \fsy_{11}\}$, & if $m =e-2$.
\end{tabular}
\end{itemize}

\subsection*{5.4} $m=2, n\in\{4,5\}$.
$\cIM{2,n}$ consists of 

\begin{itemize}
    \item[(a)] ${e-5 \choose 2}$ elements $\{\fsy_1(j,k) \mid k < j \in [0,e-2] \bs \{1,2,n-1,n\}\},$
    \item[(b)] $e-5$ elements $\{\fsy_2(j) \mid j \in [0,e-2] \bs \{1,2,n-1,n\}\},$
    \item[(c)] 
    \begin{tabular}[t]{ll}
        $\begin{cases}
        e-6 \text{ elements } \{\fsy_4(j) \mid j \in [3,e-3] \bs \{4\} \} \\
        e-7 \text{ elements } \{\fsy_5(j) \mid j \in [5,e-3]\}, 
        \end{cases}$
        & if $n=4$ \vspace{1mm}\\
        \smallskip
        
        $\begin{cases}
        e-7 \text{ elements } \{\fsy_4(j) \mid j \in [4,e-3] \bs \{5\}\} \\ 
        e-8 \text{ elements } \{\fsy_5(j) \mid j \in [6,e-3] \},
        \end{cases}$
        & if $n = 5,$ \\ 
    \end{tabular}
    \item[(d)] 
    \begin{tabular}[t]{ll}
        $6 \text{ elements } \{\fsy_6(0,-1), \fsy_6(1,-1), \fsy_7(0,0), \fsy_8(0), \fsy_{13,a},\fsy_{14}\}$, & if $n=4$ \\
        $8 \text{ elements } \{\fsy_6(0,0),\fsy_6(1,0), \fsy_7(0,0),\fsy_8(0),\fsy_{11},\fsy_{12},\fsy_{13,b},\fsy_{14}\}$, & if $n=5$.
    \end{tabular}
\end{itemize}

\subsection*{5.5} $(2,n), n \geq 6.$ 
$\cIM{2,n}$ consists of 

\begin{itemize}
    \item[(a)] ${e-5 \choose 2}$ elements $\{\fsy_1(j,k) \mid k < j \in [0,e-2] \bs \{1,2,n-1,n\}  \},$
    \item[(b)] $e-5$ elements $\{\fsy_2(j) \mid j \in [0,e-2] \bs \{1,2,n-1,n\}\},$
    \item[(c)] 
    \begin{tabular}[t]{ll}
        $\begin{cases}
        e-7 \text{ elements } \{\fsy_4(j) \mid j \in [0,e-3] \bs \{0,1,2,n-2,n\} \\   
        e-8 \text{ elements } \{\fsy_5(j) \mid j \in [0,e-3] \bs \{0,1,2,n-2,n-1,n\}, \end{cases}$ 
        & if $n \leq e-3$ \vspace{1mm}\\
        \smallskip
        
        $\begin{cases}        e-6 \text{ elements } \{\fsy_4(j) \mid j \in [0,e-3] \bs \{0,1,2,n-2\} \\ 
        e-7 \text{ elements } \{\fsy_5(j) \mid j \in [0,e-3] \bs \{0,1,2,n-2,n-1\},
        \end{cases}$ 
        &  if $n = e-2$,
    \end{tabular}
    \item[(d)] 
    \begin{tabular}[t]{ll}
     $7 \text{ elements } \{\fsy_6(0,0), \fsy_6(1,0), \fsy_7(0,0), \fsy_8(0), \fsy_{11}, \fsy_{12}, \fsy_{13,b}\}$, & if $n \leq e-4$ \\
     $7 \text{ elements } \{\fsy_6(0,1), \fsy_6(1,0), \fsy_7(1,0), \fsy_8(1), \fsy_{11}, \fsy_{12}, \fsy_{13,b}\}$,   & if $n=e-3$  \\
        $5 \text{ elements } \{\fsy_6(0,0), \fsy_6(1,1), \fsy_7(1,0), \fsy_{11},\fsy_{13,b}\}$, &  if $n=e-2.$ 
    \end{tabular}
\end{itemize}

\subsection*{5.6} $(m,n) \in \{ (2,3), (3,4) \}.$  
$\cIM{m,n}$ consists of 

\begin{itemize}
    \item[(a)] ${e-4 \choose 2}$ elements $\{\fsy_1(j,k) \mid k < j \in  [0,e-2] \bs \{m-1,m,n\} \},$
    \item[(b)] $e-4$ elements $\{\fsy_2(j) \mid j \in [0,e-2] \bs \{m-1,m,n\}\},$
    \item[(c)] 
    \begin{tabular}[t]{ll}
        $e-8 \text{ elements } \{\fsy_3(j) \mid  j \in [8, e-1]\}$, & if $(m,n) = (2,3)$ \\ 
        $e-9 \text{ elements } \{\fsy_3(j) \mid j \in [9, e-1]\}$, & if $(m,n) = (3,4),$
    \end{tabular}
    \item[(d)] 
    \begin{tabular}[t]{ll}
        $4 \text{ elements } \{\fsy_6(0,1), \fsy_6(1,1), \fsy_{10},\fsy_{13,b}\}$, & if $(m,n) = (2,3)$ \\
        $6 \text{ elements } \{\fsy_6(0,1),\fsy_6(1,1), \fsy_7(0,1), \fsy_{10},\fsy_{15}(0),\fsy_{15}(1)\}$, & if $(m,n) = (3,4)$. 
    \end{tabular}
\end{itemize}

\begin{remark}\label{rem:monomialinuniquebinomial}
    Note that in every case, each binomial has a unique monomial that does not appear in any other binomial in that list.
\end{remark}

\begin{theorem}\label{thm:mingens}
    $\IM{m,n}$ is generated by the binomials in $\cIM{m,n}.$
\end{theorem}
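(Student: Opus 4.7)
My plan is to establish \Cref{thm:mingens} in three stages: verify that each listed binomial lies in $\IS$; verify that $|\mathcal I_{e,m,n}|=\mu(\IS)$ agrees with the value given by \Cref{thm:maintheorem}; and finally verify that the binomials are linearly independent modulo $\mathfrak{m}_A \IS$, where $\mathfrak{m}_A$ is the graded maximal ideal of $A_{m,n}$. By the graded Nakayama lemma, these three facts together imply that $\mathcal I_{e,m,n}$ is a minimal generating set of $\IS$.

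The first stage is routine. Each listed binomial has the form $X^{\alpha}-X^{\beta}$, and membership in $\IS$ reduces to the numerical identity $\sum \alpha_i (e+i) = \sum \beta_i (e+i)$. For example, $\fsy_6(j,k)=X_0 X_j X_{m-1}-X_{m+2+k}X_{e-3+j-k}$ gives $e + (e+j) + (e+m-1) = (e+m+2+k)+(2e-3+j-k)$, both equal to $3e+m+j-1$; analogous one-line computations verify the membership of every $\fsy_1, \ldots, \fsy_{15}$.

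For the cardinality, one sums the sizes of the four families (a)--(d) within each of the sub-cases 5.1--5.6 and checks agreement with \Cref{thm:maintheorem}. A representative computation: in sub-case 5.3 with $n\le e-4$ one has
\[
\binom{e-5}{2}+(e-5)+(e-7)+(e-8)+6 \;=\; \binom{e-2}{2}-2,
\]
matching the formula of \Cref{thm:maintheorem} for $3\le m<n-2\le e-4$. The remaining sub-cases reduce to similar elementary arithmetic, with particular attention to the small-parameter branches near $m\in\{e-3,e-4\}$ and $n\in\{e-2,e-3\}$.

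For the linear independence I would proceed degree-by-degree. For each $\lambda$ in the support of $\beta_{1,\lambda}$, I count the binomials in $\mathcal I_{e,m,n}$ of $\phi$-degree $\lambda$; using the explicit degree formulas for $\fsy_1,\ldots,\fsy_{15}$ together with the boxed formula \eqref{mainBox} of \Cref{thm:main} and \Cref{lem:3specialcases}, this count should equal $\beta_{1,\lambda}$. \Cref{rem:monomialinuniquebinomial} then supplies the key input: each binomial in $\mathcal I_{e,m,n}$ carries a distinguished monomial that appears in no other listed binomial. A putative nontrivial relation $\sum c_i f_i \in (\mathfrak{m}_A \IS)_\lambda$ in a fixed degree can be analyzed monomial-by-monomial: cancelling the distinguished monomial of each $f_i$ would require it to occur as $X_p\cdot u$ for some monomial $u$ appearing in an element of $\IS_{\lambda-(e+p)}$, and enumerating the factorizations that can actually arise in each sub-case forces every $c_i=0$.

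I expect the third stage to be the principal obstacle. Matching the binomial degrees against $\beta_{1,\lambda}$ and verifying that the distinguished monomials cannot be absorbed into $\mathfrak{m}_A \IS$ must be carried out in every sub-case; the exceptional degrees $2e+2m$, $2e+m+n$, $2e+2n$ singled out by \Cref{lem:3specialcases} require extra care, because several families of $\fsy_i$ contribute simultaneously at those degrees and the candidate cancellations interact.
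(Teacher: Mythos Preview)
Your three–stage plan (membership, cardinality, independence modulo $\mathfrak{m}_A\IS$, then Nakayama) is exactly the paper's strategy, and stages one and two are identical to what the paper does. The difference is entirely in stage three.

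The paper bypasses the degree-by-degree $\beta_{1,\lambda}$ bookkeeping you describe with a single observation: every binomial listed in $\mathcal I_{e,m,n}$ has at least one monomial of \emph{standard} degree~$2$. Since $\IS$ contains no linear forms, every element of $\mathfrak m_A\IS$ has all monomials of standard degree $\ge 3$; hence no $\eta\in\mathcal I_{e,m,n}$ can lie in $\mathfrak m_A\IS$, and more generally any relation $\sum c_i\eta_i\in\mathfrak m_A\IS$ must have its quadratic part vanish identically in $A_{m,n}$. From there the unique-monomial remark finishes the argument quickly: if the distinguished monomial of $\eta_i$ is quadratic it forces $c_i=0$ immediately, and the paper disposes of the remaining cubic-distinguished cases by noting that their quadratic factors do not occur elsewhere (with $f_1,f_2$ handled together as $2\times 2$ minors of a generic matrix).

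Your proposed route would work, but it trades this two-line standard-degree argument for a case-by-case match against the boxed formula \eqref{mainBox} and \Cref{lem:3specialcases}, plus a factorization analysis at the exceptional degrees $2e+2m$, $2e+m+n$, $2e+2n$. That is substantially more labor for no extra generality, and the per-degree count $\#\{\eta\in\mathcal I_{e,m,n}:\deg_\phi\eta=\lambda\}=\beta_{1,\lambda}$ is in fact redundant once the total cardinality is known to equal $\mu(\IS)$ and independence in $\IS/\mathfrak m_A\IS$ is established.
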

\begin{proof}
    Using \Cref{thm:maintheorem}, it can be seen that $\cIM{m,n}$ consists of the right number of binomials to be a generating set. We first show that these binomials are, in fact, minimal generators of $\IM{m,n}$.
    
    Note that with the grading given by $\deg X_i = e+i$ for all $i \in [0,e-1],$ each $\eta \in \cIM{m,n}$ is a homogeneous binomial and thus, is in $\IM{m,n}$. Let $\mathfrak m = (\{X_i \mid i \in \JM{m,n}\})$ denote the maximal ideal of $k[X_i \mid i \in \JM{m,n}]$.  Suppose there is a binomial $\eta \in \cIM{m,n}$ such that $\eta \in \mathfrak m \IM{m,n}$. Then one can write    \begin{equation}\label{eq:binomialinmI}
        \eta = \eta_1 - \eta_2 = \sum_{j=1}^\ell r_j (\tau_1^j - \tau_2^j), 
    \end{equation}
    for some positive integer $\ell$, where, $(\tau_1^j - \tau_2^j)$ are binomials in $\IM{m,n}$ and $r_j \in \mathfrak m,$ for all $1 \leq j \leq \ell.$ Since $\tau_1^j - \tau_2^j\in \IM{m,n}$, each $\tau_i^j$ must have degree $\geq 2$ in the standard grading. Next, observe that at least one $\eta_i$ is quadratic, say $\eta_1$. Then $$\eta_2 + \sum_{j=1}^\ell r_j \tau_1^j - \sum_{j=1}^\ell r_j \tau_2^j$$
    is also quadratic. This is a contradiction as the degree of each monomial $r_j \tau_i^j$ is $\geq 3$. Thus, $\cIM{m,n} \subseteq \IM{m,n} \bs \mathfrak{m}\IM{m,n}.$ It remains to show that in each case above, the binomials in the set $\cIM{m,n}$ cannot be obtained as a linear combination of other ones in the same set. From \Cref{rem:monomialinuniquebinomial}, we know that in each case, every binomial has a unique monomial, either of degree 2 or 3. Moreover, if the said unique monomial is of degree 3, then no degree 2 monomial factors of it appear in any other binomial, except in the case of $f_2(j)$ where one sees an overlap of such factors with $f_1(j,k).$ Observe that $f_1$ and $f_2$ together constitute minors of a generic matrix; thus forming a subset of a minimal generating set. The presence of unique monomials, hence, concludes the argument.
\end{proof}

\section{A GAP-Algorithm} \label{section:algorithm}
In this section, we give a GAP code for a function named \texttt{GradedBetti} that, for given (imput) values of the multiplicity $e$, and the drops $m$ and $n,$ outputs all the graded Betti numbers $\b$ (using \Cref{lem:complexisexact}) and the first Betti number $\beta_1 = \sum_{\l} \b$ of $\SM{m,n}.$

The code first constructs the semigroup \texttt{S} for the given values of \texttt{e,m} and \texttt{n}. Then, for $2e+2 \leq \lambda \le 4e-2,$ the graded Betti numbers are computed. For each $\lambda$, the lists \texttt{ZD} and \texttt{OD} store the zero- and one-dimensional faces, respectively. Using \Cref{rem:ideaofproof}, the graded Betti number is computed and stored as \texttt{GB}.

{\small \begin{verbatim}
GradedBetti := function(e,m,n)
local L, S, Betti, lam, i, j, ZD, OD, Mat, Rk, GB;
L := [e..(2*e-1)];;
Remove(L,m+1);;
Remove(L,n);;
S := NumericalSemigroup(L);; 
Betti := 0;;

for lam in [(2*e+2)..(4*e-2)] do
ZD := []; OD := [];

for i in [1..Length(L)] do 
  if (lam - L[i] in S) then Append(ZD,[L[i]]); fi;
  for j in [(i+1)..Length(L)] do
    if (lam - L[i] - L[j] in S) then Append(OD,[[L[i],L[j]]]); fi;
  od;
od;  
Print("----------\nlambda = ", lam, "\n"); 
Print("0-dim faces: ", ZD, "\n1-dim faces: ", OD, "\n");

if Length(OD) > 0 then
  Mat := NullMat(Length(OD),Length(ZD));
  for i in OD do
    Mat[Position(OD,i)][Position(ZD,i[1])] := 1;
    Mat[Position(OD,i)][Position(ZD,i[2])] := -1;
  od;
  Rk := RankMat(Mat);
  Print("Number of 0-dim faces: ", Length(ZD), "\nRank M: ", Rk, "\n");
  GB := Length(ZD)-1-Rk;
  Print("Graded Betti no: ", GB, "\n");
  Betti := Betti + GB;
else
  Print("Graded Betti no: 0\n");
fi;
od;
Print("----------\n");
Print("Minimal number of generators of the defining equation of S(", e, ",", m, ",", n,") 
is: ", Betti);
end;
\end{verbatim}
For example, for the semigroup $S_10(2,6)$, the command \texttt{GradedBetti(10,2,6)} can be used to compute the graded Betti numbers and the first Betti number.

\section{Appendix}

\subsubsection{Computing $\b$ when $\l =2e+2m$}\label{x=2m-5}

For $2 \leq m \leq \left\lfloor \frac{e-1}{2} \right\rfloor$ and $j<l \in A \setminus \{2m-n\},$ $$\{e+j,e+l\} \in \D_\l \Leftrightarrow \l - (e+j)-(e+l)=0 \Leftrightarrow j+l = 2m.$$ This gives us the following pairs for $(j,l)$: 
     \begin{center}
\begin{tikzpicture}
    \node at (4,3) {$0 < 1 < \cdots < m-1 < m+1 < \cdots < 2m-1 < 2m$};
    \draw (0.25,3.25) -- (0.25,3.75);
    \draw (0.25,3.75) -- (7.7,3.75);
    \draw (7.7,3.75) -- (7.7, 3.25);
    \draw (3,3.25) -- (3,3.50);
   \draw (3,3.50) -- (4,3.50);
    \draw (4,3.50) -- (4,3.25);
    \draw (0.95,2.75) -- (0.95,2.5);
    \draw (0.95,2.5) -- (6.85,2.5);
    \draw (6.85,2.5) -- (6.85, 2.75);
\end{tikzpicture}
\end{center}
When $m \leq \ll \frac{n-1}{2} \rr$, $n$ is not in this sequence but $m$ is. After removing the latter we get $m$ pairs. When $\lc \frac{n}{2} \rc \leq m \leq \left\lfloor \frac{e-1}{2} \right\rfloor$, both $n$ and $2m-n$ are in the above sequence along with $m$. After removing all three, we get $m-1$ pairs.

When $ \lc \frac{e}{2} \rc < m \leq e-3 $, we get the one-dimensional faces $\{e,e+j\}$ for all $j \in [1,2m-e]$. Note that $n > m > 2m-e$ since $m <e$, and $2m-n > 2m-e$ so none of these elements are in the set $[1,2m-e].$ Thus, we get $2m-e$ one-dimensional faces from here. 

Any other $1$-dimensional face that adds to the rank of $M$ must contain $e+l$ for some $l \geq 2m-e+1$. This happens only when $\l -(e+j)-(e+l) =2m-j-l=0 \in \SM{m,n}$. So we have the following pairs $(j,l)$ with $j+l=2m$.

\begin{center}
\begin{tikzpicture}
    \node at (4,3) {$2m-e+1 < 2m-e+2 < \cdots < m-1 < m+1 < \cdots < e-2 < e-1.$};
    \draw (-0.75,3.25) -- (-0.75,3.75);
    \draw (-0.75,3.75) -- (8.85,3.75);
    \draw (8.85,3.75) -- (8.85, 3.25);
    \draw (4.25,3.25) -- (4.25,3.50);
   \draw (4.25,3.50) -- (5.5,3.50);
    \draw (5.5,3.50) -- (5.5,3.25);
    \draw (1.75,2.75) -- (1.75,2.5);
    \draw (1.75,2.5) -- (7.75,2.5);
    \draw (7.75,2.5) -- (7.75, 2.75);
\end{tikzpicture}
\end{center}

This gives us $e-m-2$ pairs after taking away $(n, 2m-n)$.

When $m = \lc e/2 \rc$, then we don't get any faces of the form $\{e, e+j\}$, and the only one dimensional faces $\{e+j,e+l\}$ occur when $2m-j-l=0$ giving the following $m-2$ pairs after taking away $m,n,2m-n$ from the sequence:  
\begin{center}
\begin{tikzpicture}
    \node at (4,3) {$1 < 2 < \cdots < 2m-2 < 2m-1$};
    \draw (1.5,3.25) -- (1.5,3.50);
    \draw (1.5,3.50) -- (6,3.50);
    \draw (6,3.50) -- (6,3.25);
    \draw (2.2,2.75) -- (2.2,2.5);
    \draw (2.2,2.5) -- (4.75,2.5);
    \draw (4.75,2.5) -- (4.75, 2.75);
\end{tikzpicture}
\end{center}

\subsubsection{Computing $\b$ when $\l = 2e+m+n$}\label{x=m+n-5}

When $x=m+n-5$, the computations are almost the same. When $2 \leq m \leq e-n-1$, the only one-dimensional faces are $\{e+j,e+l\}$ whenever $j<l \in A$ with $j+l=m+n$, i.e. the following: \begin{center}
\begin{tikzpicture}
    \node at (4,3) {$0 < 1 < \cdots < m < \cdots < n < \cdots < m+n-1 < m+n$};
    \draw (-0.35,3.25) -- (-0.35,3.75);
    \draw (-0.35,3.75) -- (8,3.75);
    \draw (8,3.75) -- (8, 3.25);
    \draw (2,3.25) -- (2,3.50);
   \draw (2,3.50) -- (3.7,3.50);
    \draw (3.7,3.50) -- (3.7,3.25);
    \draw (0.35,2.75) -- (0.35,2.5);
    \draw (0.35,2.5) -- (6.5,2.5);
    \draw (6.5,2.5) -- (6.5, 2.75);
\end{tikzpicture}
\end{center}
Excluding the pair $\{m,n\}$, we get $\ll \frac{m+n-1}{2} \rr$ one-dimensional faces. Thus, $\im \d_1 \cong \Z^{\ll \frac{m+n-1}{2} \rr}$.

When $e-n \leq m \leq e-3$, we have the one-dimensional faces $\{e,e+j\}$ for all $j \in [1,m+n-e]$. Note that $ m,n \not\in [1,m+n-e]$ as $m = m+n-n > m+n-e$. This gives us $m+n-e$ faces. We also have the $\ll \frac{2e-m-n-3}{2} \rr$ faces given by the following pairs, excluding $\{m,n\}$: 
\begin{center}
\begin{tikzpicture}
    \node at (2.1,3) {$m+n-e+1 <  \cdots < m < \cdots < n < \cdots < e-1$};
    \draw (2,3.25) -- (2,3.6);
   \draw (2,3.6) -- (3.7,3.6);
    \draw (3.7,3.6) -- (3.7,3.25);
    \draw (-1,2.75) -- (-1,2.4);
    \draw (-1,2.4) -- (5.75,2.4);
    \draw (5.75,2.4) -- (5.75, 2.75);
\end{tikzpicture}
\end{center}
Thus, $\rk M = m+n-e + \ll \frac{2e-m-n-3}{2} \rr = \ll \frac{m+n-3}{2} \rr$.

 \medskip

\subsubsection{Computing $\b$ when $\l=2e+2n$}\label{x=2n-5}

When $3 \leq n \leq \ll \frac{e-1}{2} \rr,$ the only one-dimensional faces are $\{e+j,e+l\}$ with $j+l=2n$ as $\l - (e+j)-(e+l) \leq e-2$ for any $j<l \in A\setminus B$. These are the following faces:
 \begin{center}
\begin{tikzpicture}
    \node at (4,3) {$0 < 1 < \cdots < n-1 < n+1 < \cdots < 2n-1 < 2n$};
    \draw (0.25,3.25) -- (0.25,3.75);
    \draw (0.25,3.75) -- (7.7,3.75);
    \draw (7.7,3.75) -- (7.7, 3.25);
    \draw (3,3.25) -- (3,3.50);
   \draw (3,3.50) -- (4,3.50);
    \draw (4,3.50) -- (4,3.25);
    \draw (0.95,2.75) -- (0.95,2.5);
    \draw (0.95,2.5) -- (6.85,2.5);
    \draw (6.85,2.5) -- (6.85, 2.75);
\end{tikzpicture}
\end{center}
From this, we take away $m,2n-m,$ and $n$ to get $\im \d_1 \cong \Z^{n-1}$ in this case.

When $\lc \frac{e}{2} \rc \leq n \leq \ll \frac{e+m-1}{2} \rr,$ we have the 1-dimensional faces $\{e,e+j\}$ for all $j \in [1,2n-e].$ Further, as $m, n $ and $2n-m$ are not in this set, we get $2n-e$ faces from here. Note that when $n= \lc e/2 \rc$, no 1-dimensional faces of this form are possible but then $2n-e=0$ still gives the right number.  All other faces that add to $\rk M$ are of the form $\{e+j,e+l\}$ with $j+l = 2n$ and $j \geq 2n-e+1$. These are the following $e-n-2$ faces, after taking away $m,n,2n-m$ as all three are in the sequence:

\begin{align} \label{app:3:2}
\begin{tikzpicture} 
    \node at (4,3) {$2n-e+1 < 2n-e+2 < \cdots < n-1 < n+1 < \cdots < e-2 < e-1.$};
    \draw (-0.75,3.25) -- (-0.75,3.75);
    \draw (-0.75,3.75) -- (8.85,3.75);
    \draw (8.85,3.75) -- (8.85, 3.25);
    \draw (4.25,3.25) -- (4.25,3.50);
   \draw (4.25,3.50) -- (5.5,3.50);
    \draw (5.5,3.50) -- (5.5,3.25);
    \draw (1.75,2.75) -- (1.75,2.5);
    \draw (1.75,2.5) -- (7.75,2.5);
    \draw (7.75,2.5) -- (7.75, 2.75);
\end{tikzpicture}
\end{align}
So, in this case, we get $\im \d_1 \cong \Z^{n-2}$. 

When $\lc \frac{e+m}{2} \rc \leq n \leq e-2,$ we also have the $1$-dim faces $\{e,e+j\}$ for all $j \in [1,2n-e]$. As $n$ is not in this set but $m$ is, we get $2n-e-1$ faces from here. The only other one-dimensional faces that add to rank $M$ are as in \eqref{app:3:2}.
As both $m, 2n-m$ are not in this sequence but $n$ is, this gives us $e-n-1$ faces. So, in this case, we get $\im \d_1 \cong \Z^{n-2}$. 

\subsubsection{Adding $\b$ from Part 2}\label{sum-betti-part2}

We begin by adding the values from the table \ref{mainBox} assuming that $m \neq 2$ and $(m,n) \neq (3,4).$

\begin{align} \label{even-odd}
    \sum_{\l = 2e+2}^{4e-2} \b =& \sum_{\l=2e+2}^{3e-1} \left\lfloor \frac{\l}{2} \right\rfloor - e(e-2) - (n-m) - 2(e-n) + (2e-3)(e-1) \\ \nonumber
    &- \sum_{\l=3e}^{4e-2} \left\lfloor \frac{\l-1}{2} \right\rfloor + (n-m) + 2(e-n-1) \\ \nonumber
    =& \ e^2-3e+1 + \sum_{\l=2e+2}^{3e-2} \left\lfloor \frac{\l}{2} \right\rfloor - \sum_{\l=3e}^{4e-3} \left\lfloor \frac{\l}{2} \right\rfloor. 
\end{align} 
When $e$ is even, \eqref{even-odd} simplifies to the following:
\begin{align*}
    \sum_{\l = 2e+2}^{4e-2} \b =& \
    e^2-3e+1 +2\sum_{i=1}^{e/2-2} (e+i) + \frac{3e}{2}-1 - 2\sum_{i=0}^{e/2-2} \left( \frac{3e}{2} +i \right) \\
    =& \ e^2 - \frac{3e}{2} + 2e \left( \frac{e}{2}-2 \right) - 3e \left( \frac{e}{2} -1 \right) 
     = \ \frac{e^2}{2} - \frac{5e}{2}.
\end{align*}

When $e$ is odd, \eqref{even-odd} simplifies to the following:
\begin{align*}
    \sum_{\l = 2e+2}^{4e-2} \b =& \
    e^2-3e+1 +2\sum_{i=1}^{(e-3)/2} (e+i) - \sum_{i=0}^{(e-3)/2} \left( \frac{3e-1}{2}+i \right) - \sum_{i=0}^{(e-5)/2} \left( \frac{3e+1}{2} +i \right) \\
    =& \ e^2 - 3e + 1 + e(e-3) + \frac{1}{4}(e-3)(e-1) - \frac{1}{4} (3e-1)(e-1) - \frac{1}{8} (e-3)(e-1) \\
    &- \frac{1}{4} (3e+1)(e-3) - \frac{1}{8} (e-5)(e-3)
     = \ \frac{e^2}{2} - \frac{5e}{2}.
\end{align*}

\vspace{0.5cm}
\begin{acknowledgment*}{\rm
This work was supported by the Association for Women in Mathematics under Grant No. NSF-DMS 2113506, which partially funded the Women in Commutative Algebra III Workshop. The workshop was hosted and financially supported by Banff International Research Station at its CMO Oaxaca location. Additional financial support was provided by the NSF grant DMS–2433082. Hema Srinivasan is supported by grants from Simons Foundation. Kriti Goel was also supported by grant CEX2021-001142-S (Excelencia Severo Ochoa) funded by MICIU/AEI/10.13039/501100011033 (Ministerio de Ciencia, Innovaci\'on y Universidades, Spain).
We are also grateful to the software systems Singular \cite{Singular} and GAP \cite{NumericalSgps} for serving as an excellent source of inspiration.
}\end{acknowledgment*}


\begin{thebibliography}{DGPS24}
		
		\bibitem[BH93]{BrunsHerzog}
		Winfried Bruns and J\"urgen Herzog.
		\newblock {\em Cohen-{M}acaulay rings}, volume~39 of {\em Cambridge Studies in
			Advanced Mathematics}.
		\newblock Cambridge University Press, Cambridge, 1993.
		
		\bibitem[Bre75]{Bresinsky}
		H.~Bresinsky.
		\newblock On prime ideals with generic zero {$x\sb{i}=t\sp{n\sb{i}}$}.
		\newblock {\em Proc. Amer. Math. Soc.}, 47:329--332, 1975.
		
		\bibitem[CMS24]{CMS}
		Giulio Caviglia, Alessio Moscariello, and Alessio Sammartano.
		\newblock Bounds for syzygies of monomial curves.
		\newblock {\em Proc. Amer. Math. Soc.}, 152(9):3665--3678, 2024.
		
		\bibitem[DGM24]{NumericalSgps}
		M.~Delgado, P.~A. Garcia\texttt{\symbol{45}}Sanchez, and J.~Morais.
		\newblock {NumericalSgps}, a package for numerical semigroups, {V}ersion 1.4.0.
		\newblock \href {https://gap-packages.github.io/numericalsgps}
		{\texttt{https://gap\texttt{\symbol{45}}packages.github.io/}\discretionary{}{}{}\texttt{numericalsgps}},
		Aug 2024.
		\newblock GAP package.
		
		\bibitem[DGPS24]{Singular}
		Wolfram Decker, Gert-Martin Greuel, Gerhard Pfister, and Hans Sch\"onemann.
		\newblock {\sc Singular} {4-4-0} --- {A} computer algebra system for polynomial
		computations.
		\newblock \url{http://www.singular.uni-kl.de}, 2024.
		
		\bibitem[GSS13]{GimSenSri}
		Philippe Gimenez, Indranath Sengupta, and Hema Srinivasan.
		\newblock Minimal graded free resolutions for monomial curves defined by
		arithmetic sequences.
		\newblock {\em J. Algebra}, 388:294--310, 2013.
		
		\bibitem[Her70]{Herzog}
		J\"urgen Herzog.
		\newblock Generators and relations of abelian semigroups and semigroup rings.
		\newblock {\em Manuscripta Math.}, 3:175--193, 1970.
		
		\bibitem[HS14]{HerzogStamate}
		J\"urgen Herzog and Dumitru~I. Stamate.
		\newblock On the defining equations of the tangent cone of a numerical
		semigroup ring.
		\newblock {\em J. Algebra}, 418:8--28, 2014.
		
		\bibitem[Pat93]{Patil1993239}
		Dilip~P. Patil.
		\newblock Minimal sets of generators for the relation ideals of certain
		monomial curves.
		\newblock {\em Manuscripta Math.}, 80(3):239--248, 1993.
		
		\bibitem[RGS09]{Rosales2009}
		J.~C. Rosales and P.~A. Garc\'ia-S\'anchez.
		\newblock {\em Numerical semigroups}, volume~20 of {\em Developments in
			Mathematics}.
		\newblock Springer, New York, 2009.
		
		\bibitem[Sal80]{Sally}
		Judith~D. Sally.
		\newblock Good embedding dimensions for {G}orenstein singularities.
		\newblock {\em Math. Ann.}, 249(2):95--106, 1980.
		
		\bibitem[Sta18]{Stam}
		Dumitru~I. Stamate.
		\newblock Betti numbers for numerical semigroup rings.
		\newblock In {\em Multigraded algebra and applications}, volume 238 of {\em
			Springer Proc. Math. Stat.}, pages 133--157. Springer, Cham, 2018.
		
		\bibitem[Val94]{Valla}
		G.~Valla.
		\newblock On the {B}etti numbers of perfect ideals.
		\newblock {\em Compositio Math.}, 91(3):305--319, 1994.
		
	\end{thebibliography}
\end{document}